\begin{document}

\title{$\ast$-transforms of acyclic complexes}
\author{Taro Inagawa}
\date{
\small
Graduate School of Science, Chiba University, \\
1-33 Yayoi-Cho, Inage-Ku, Chiba-Shi, 263-8522, JAPAN
}
\maketitle

\newcommand{\lra}{\longrightarrow}

\newcommand{\lsa}[1]{{}^{\ast}\!{#1}}
\newcommand{\lsp}[1]{{}'\!{#1}}
\newcommand{\lspp}[1]{{}''\!{#1}}

\newcommand{\brac}[1]{[{#1}]}
\newcommand{\ang}[1]{\langle{#1}\rangle}

\newtheorem{thm}{Theorem}[section]
\newtheorem{prop}[thm]{Proposition}
\newtheorem{cor}[thm]{Corollary}
\newtheorem{dfn}[thm]{Definition}
\newtheorem{lem}[thm]{Lemma}
\newtheorem{rem}[thm]{Remark}
\newtheorem{ex}[thm]{Example}
\newtheorem{claim}{Claim}

\begin{abstract}
Let $R$ be an $n$-dimensional Cohen-Macaulay local ring and $Q$ a parameter ideal of $R$.
Suppose that an acyclic complex $(F_{\bullet}, \varphi_{\bullet})$ of length $n$ of finitely generated free $R$-modules is given.
We put $M = {\rm Im} \, \varphi_{1}$, which is an $R$-submodule of $F_{0}$.
Then $F_{\bullet}$ is an $R$-free resolution of $F_{0}/M$.
In this paper, we describe a concrete procedure to get an acyclic complex $\lsa{F_{\bullet}}$ of length $n$ that resolves $F_{0}/(M :_{F_{0}} Q)$.
\end{abstract}

\section{Introduction}

The $\ast$-transform of an acyclic complex of length $3$ is introduced in \cite{fin1}.
The purpose of this paper is to generalize it for acyclic complexes of length $n \geq 2$.
Let $(R, \mathfrak{m})$ be an $n$-dimensional Cohen-Macaulay local ring and $x_{1}, x_{2}, \dots, x_{n}$ an sop for $R$.
We put $Q = (x_{1}, x_{2}, \dots, x_{n})R$.
Suppose that an acyclic complex
\[
0 \lra F_{n} \stackrel{\varphi_{n}}{\lra} F_{n-1} \lra \dotsb \lra F_{1} \stackrel{\varphi_{1}}{\lra} F_{0}
\]
of finitely generated free $R$-modules such that $\text{Im} \, \varphi_{n} \subseteq QF_{n-1}$ is given.
In this paper, we describe an operation to get an acyclic complex
\[
0 \lra \lsa{F_{n}} \stackrel{\lsa{\varphi_{n}}}{\lra} \lsa{F_{n-1}} \lra \dotsb \lra \lsa{F_{1}} \stackrel{\lsa{\varphi_{1}}}{\lra} \lsa{F_{0}} = F_{0}
\]
such that $\text{Im} \, \lsa{\varphi_{1}} = \text{Im} \, \varphi_{1} :_{F_{0}} Q$ and $\text{Im} \, \lsa{\varphi_{n}} \subseteq \mathfrak{m} \cdot \lsa{F_{n-1}}$,
which is called the $\ast$-transform of $F_{\bullet}$ with respect to $x_{1}, x_{2}, \dots, x_{n}$.
As we give a practical condition for $\lsa{F_{n}}$ to be vanished, it is possible to consider when the depth of $F_{0} / (\text{Im} \, \varphi_{1} :_{F_{0}} Q)$ is positive.
This is useful when we compute the saturation of ideals.
In fact, in the subsequent paper \cite{fin2}, using $\ast$-transform we compute the saturation of the $m$-th power of the ideal generated by the maximal minors of the following $m \times (m+1)$ matrix
\[
\begin{pmatrix}
x_{1}^{\alpha_{1,1}}  & x_{2}^{\alpha_{1,2}}   & x_{3}^{\alpha_{1,3}}  & \dotsb & x_{m}^{\alpha_{1,m}}   & x_{m+1}^{\alpha_{1,m+1}} \\
x_{2}^{\alpha_{2,1}}  & x_{3}^{\alpha_{2,2}}   & x_{4}^{\alpha_{2,3}}  & \dotsb & x_{m+1}^{\alpha_{2,m}} & x_{1}^{\alpha_{2,m+1}}   \\
x_{3}^{\alpha_{3,1}}  & x_{4}^{\alpha_{3,2}}   & x_{5}^{\alpha_{3,3}}  & \dotsb & x_{1}^{\alpha_{3,m}}   & x_{2}^{\alpha_{3,m+1}}   \\
\vdots & \vdots  & \vdots &        & \vdots  & \vdots  \\
x_{m}^{\alpha_{m,1}}  & x_{m+1}^{\alpha_{m,2}} & x_{1}^{\alpha_{m,3}}  & \dotsb & x_{m-2}^{\alpha_{m,m}} & x_{m-1}^{\alpha_{m,m+1}}
\end{pmatrix},
\]
where $x_{1}, x_{2}, x_{3}, \dots, x_{m}, x_{m+1}$ is an sop and $\{ \alpha_{i,j} \}_{1 \leq i \leq m, 1 \leq j \leq m+1}$ is a family of positive integers.

Throughout this paper, $R$ is a commutative ring, and in the last section, we assume that $R$ is an $n$-dimensional Cohen-Macaulay local ring.
For $R$-modules $G$ and $H$, the elements of $G \oplus H$ are denoted by column vectors;
\[
\begin{pmatrix} g \\ h \end{pmatrix} \quad \text{($g \in G$, $h \in H$)}.
\]
In particular, the elements of the forms
\[
\begin{pmatrix} g \\ 0 \end{pmatrix} ~ \text{and} ~ \begin{pmatrix} 0 \\ h \end{pmatrix}
\]
are denoted by $\brac{g}$ and $\ang{h}$, respectively.
Moreover, if $V$ is a subset of $G$, then the family $\{ \brac{v} \}_{v \in V}$ is denoted by $\brac{V}$.
Similarly $\ang{W}$ is defined for a subset $W$ of $H$.
If $T$ is a subset of an $R$-module, we denote by $R \cdot T$ the $R$-submodule generated by $T$.
If $S$ is a finite set, $\sharp \, S$ denotes the number of elements of $S$.

\section{Preliminaries}

In this section, we summarize preliminary results. Let $R$ be a commutative ring.

\begin{lem}\label{1}
Let $G_{\bullet}$ and $F_{\bullet}$ be acyclic complexes, whose boundary maps are denoted by $\partial_{\bullet}$ and $\varphi_{\bullet}$, respectively.
Suppose that a chain map $\sigma_{\bullet} : G_{\bullet} \lra F_{\bullet}$ is given and $\sigma_{0}^{-1}({\rm Im} \, \varphi_{1}) = {\rm Im} \, \partial_{1}$ holds.
Then the mapping cone ${\rm Cone}(\sigma_{\bullet}) :$
\[
\dotsb
\lra
\begin{matrix} G_{p-1} \\ \oplus \\ F_{p} \end{matrix}
\stackrel{\psi_{p}}{\lra}
\begin{matrix} G_{p-2} \\ \oplus \\ F_{p-1} \end{matrix}
\lra
\dotsb
\lra
\begin{matrix} G_{1} \\ \oplus \\ F_{2} \end{matrix}
\stackrel{\psi_{2}}{\lra}
\begin{matrix} G_{0} \\ \oplus \\ F_{1} \end{matrix}
\stackrel{\psi_{1}}{\lra}
F_{0}
\lra
0
\]
is acyclic, where
\[
\psi_{p} = \begin{pmatrix} \partial_{p-1} & 0 \\ (-1)^{p-1} \cdot \sigma_{p-1} & \varphi_{p} \end{pmatrix}
~ {\it for} ~ \forall p \geq 2 ~ {\it and} ~
\psi_{1} = \begin{pmatrix} \sigma_{0} & \varphi_{1} \end{pmatrix}.
\]
Hence, if $G_{\bullet}$ and $F_{\bullet}$ are complexes of finitely generated free $R$-modules, then ${\rm Cone}(\sigma_{\bullet})$ gives an $R$-free resolution of ${\rm Im} \, \varphi_{1} + {\rm Im} \, \sigma_{0}$.
\end{lem}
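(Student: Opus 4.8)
The plan is to prove directly that $\mathrm{Cone}(\sigma_\bullet)$ is exact at every term of positive homological degree, and then to read off $H_0$. Before the exactness argument I would record the routine check that $\psi_\bullet$ is genuinely a complex, both because it confirms the role of the sign convention and because the same identities reappear in the chase. Computing $\psi_{p-1}\psi_p$ for $p\geq 3$, the two diagonal entries vanish since $\partial_\bullet$ and $\varphi_\bullet$ are complexes, while the lower-left entry is $(-1)^{p-2}\sigma_{p-2}\partial_{p-1}+(-1)^{p-1}\varphi_{p-1}\sigma_{p-1}$, which cancels after substituting the chain-map identity $\sigma_{p-2}\partial_{p-1}=\varphi_{p-1}\sigma_{p-1}$; the boundary cases $\psi_1\psi_2=0$ and $\psi_2\psi_3=0$ follow the same way.

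Since ${\rm Im}\,\psi_{p+1}\subseteq\ker\psi_p$ is immediate from the previous paragraph, the content is the reverse inclusion for each $p\geq 1$, which I would establish by a diagram chase split into two regimes. Suppose $\psi_p$ annihilates an element with components $g\in G_{p-1}$ and $f\in F_p$. For $p\geq 2$, the vanishing of the first component gives $\partial_{p-1}(g)=0$, and since $p-1\geq 1$ the acyclicity of $G_\bullet$ yields $g=\partial_p(g')$ for some $g'$; feeding this through the chain-map identity rewrites the second-component equation as $\varphi_p\big(f+(-1)^{p-1}\sigma_p(g')\big)=0$, so acyclicity of $F_\bullet$ supplies $f'\in F_{p+1}$ with $\varphi_{p+1}(f')=f+(-1)^{p-1}\sigma_p(g')$, and a direct computation (using $(-1)^p+(-1)^{p-1}=0$) shows that $(g',f')$ is carried to $(g,f)$ by $\psi_{p+1}$.

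The case $p=1$ is the crux, and it is the only place the hypothesis $\sigma_0^{-1}({\rm Im}\,\varphi_1)={\rm Im}\,\partial_1$ enters: here $\partial_0$ is unavailable, so acyclicity of $G_\bullet$ cannot be used to solve for $g$. Instead, from $\sigma_0(g)+\varphi_1(f)=0$ we get $\sigma_0(g)\in{\rm Im}\,\varphi_1$, that is $g\in\sigma_0^{-1}({\rm Im}\,\varphi_1)={\rm Im}\,\partial_1$, which exactly replaces the missing acyclicity and lets us write $g=\partial_1(g')$; the remainder of the chase then proceeds as before, using $\sigma_0\partial_1=\varphi_1\sigma_1$ and $\ker\varphi_1={\rm Im}\,\varphi_2$ to produce a preimage under $\psi_2$. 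This settles acyclicity. Finally, since ${\rm Im}\,\psi_1={\rm Im}\,\sigma_0+{\rm Im}\,\varphi_1$, we obtain $H_0(\mathrm{Cone}(\sigma_\bullet))=F_0/({\rm Im}\,\varphi_1+{\rm Im}\,\sigma_0)$; when all modules are finitely generated and free, replacing the target $F_0$ by the surjection onto ${\rm Im}\,\psi_1$ exhibits the asserted free resolution of ${\rm Im}\,\varphi_1+{\rm Im}\,\sigma_0$.

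I expect the only real obstacle to be bookkeeping: keeping the $(-1)^{p-1}$ signs consistent between the chain-map substitution and the final verification that $(g',f')\mapsto(g,f)$, and being careful that the regimes $p\geq 2$ and $p=1$ rest on structurally different inputs — acyclicity of $G_\bullet$ in the former, the hypothesis on $\sigma_0$ in the latter. A cleaner-looking but sign-riskier alternative is the mapping-cone long exact sequence $\cdots\to H_p(F_\bullet)\to H_p(\mathrm{Cone})\to H_{p-1}(G_\bullet)\to H_{p-1}(F_\bullet)\to\cdots$, where positive-degree acyclicity of $G_\bullet$ and $F_\bullet$ kills everything except a contribution in degree $1$, and one identifies the degree-$0$ connecting map with the map induced by $\sigma_0$, whose kernel is $\sigma_0^{-1}({\rm Im}\,\varphi_1)/{\rm Im}\,\partial_1=0$; I prefer the self-contained chase above precisely to avoid matching the paper's sign conventions against those implicit in the connecting homomorphism.
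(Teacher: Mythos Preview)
Your argument is correct. The paper does not actually prove this lemma: its entire proof is the single line ``See \cite[2.1]{fin1}'', deferring to the earlier paper of Fukumuro, Inagawa and Nishida. Your self-contained diagram chase is the standard way to establish such a result and is almost certainly what appears in the cited reference; the alternative you sketch via the long exact sequence of the mapping cone is equally valid and leads to the same identification of the degree-$1$ obstruction with $\sigma_0^{-1}(\mathrm{Im}\,\varphi_1)/\mathrm{Im}\,\partial_1$.
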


\begin{proof}
See \cite[2.1]{fin1}.
\end{proof}

\begin{lem}\label{2}
Let $2 \leq n \in \mathbb{Z}$ and $C_{\bullet \bullet}$ be a double complex such that $C_{p,q} = 0$ unless $0 \leq p, q \leq n$.
For any $p, q \in \mathbb{Z}$, we denote the boundary maps $C_{p,q} \lra C_{p-1,q}$ and $C_{p,q} \lra C_{p,q-1}$ by $d'_{p,q}$ and $d''_{p,q}$, respectively.
We assume that $C_{p \bullet}$ and $C_{\bullet q}$ are acyclic for $0 \leq p, q \leq n$.
Let $T_{\bullet}$ be the total complex of $C_{\bullet \bullet}$ and let $d_{\bullet}$ be its boundary map, that is, if $\xi \in C_{p,q} \subseteq T_{r} ~ (p + q = r)$, then
\[
d_{r}(\xi) = (-1)^{p} \cdot d''_{p,q}(\xi) + d'_{p,q}(\xi) \in C_{p,q-1} \oplus C_{p-1,q} \subseteq T_{r-1}.
\]
Then the following assertions hold.
\begin{itemize}
\item[{\rm (1)}]
Suppose that $\xi_{n} \in C_{n,0}$ and $\xi_{n-1} \in C_{n-1,1}$ such that $d'_{n,0}(\xi_{n}) = (-1)^{n} \cdot d''_{n-1,1}(\xi_{n-1})$ are given.
Then there exist elements $\xi_{p} \in C_{p,n-p}$ for $0 \leq \forall p \leq n-2$ such that
\begin{multline*}
\quad \xi_{n} + \xi_{n-1} + \xi_{n-2} + \dots + \xi_{0} \in {\rm Ker} \, d_{n} \\
\subseteq T_{n} = C_{n,0} \oplus C_{n-1,1} \oplus C_{n-2,2} \oplus \dots \oplus C_{0,n}. \quad
\end{multline*}
\item[{\rm (2)}]
Suppose that $\xi_{n} + \xi_{n-1} + \dots + \xi_{1} + \xi_{0} \in {\rm Ker} \, d_{n} \subseteq T_{n} = C_{n,0} \oplus C_{n-1,1} \oplus \dots \oplus C_{1,n-1} \oplus C_{0,n}$
and $\xi_{0} \in {\rm Im} \, d'_{1,n}$. Then
\[
\xi_{n} + \xi_{n-1} + \dots + \xi_{1} + \xi_{0} \in {\rm Im} \, d_{n+1}.
\]
In particular, we have $\xi_{n} \in {\rm Im} \, d''_{n,1}$.
\end{itemize}
\end{lem}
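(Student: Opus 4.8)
The plan is to prove both parts by a ``staircase'' construction along the antidiagonal $p+q=n$ (resp. $p+q=n+1$) of the double complex, converting the single condition $d_n(\sum\xi_p)=0$ (resp. the search for a $d_{n+1}$-preimage) into a chain of one-variable lifting problems solved using column acyclicity in (1) and row acyclicity in (2). The one fact about $C_{\bullet\bullet}$ that I will use repeatedly is that $d_{r-1}\circ d_r=0$ for the total complex forces the squares to commute, i.e.\ $d'\circ d''=d''\circ d'$ wherever the composite is defined; together with $d'\circ d'=0$ and $d''\circ d''=0$ this is all the double-complex structure needed.

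For (1), I first rewrite the cycle condition componentwise: comparing the $C_{a,n-1-a}$-entries of $d_n\bigl(\sum_{p}\xi_p\bigr)$, the element $\sum_{p=0}^n\xi_p$ lies in ${\rm Ker}\,d_n$ if and only if
\begin{equation*}
(-1)^a d''_{a,n-a}(\xi_a)+d'_{a+1,n-1-a}(\xi_{a+1})=0\qquad(0\le a\le n-1),\tag{$\dagger_a$}
\end{equation*}
and the hypothesis is precisely $(\dagger_{n-1})$. I then construct $\xi_{n-2},\dots,\xi_0$ by descending induction so that $(\dagger_a)$ holds for every $a$: given $\xi_{a+1}$ with $(\dagger_{a+1})$ available, I set $\eta_a:=(-1)^{a+1}d'_{a+1,n-1-a}(\xi_{a+1})\in C_{a,n-1-a}$ and look for $\xi_a\in C_{a,n-a}$ with $d''_{a,n-a}(\xi_a)=\eta_a$. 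Because $n-1-a\ge 1$ and the column $C_{a,\bullet}$ is acyclic, such $\xi_a$ exists as soon as $\eta_a\in{\rm Ker}\,d''_{a,n-1-a}$. Verifying this kernel membership is the crux: applying $d''$, commuting it past $d'$, and substituting $(\dagger_{a+1})$ rewrites $d''_{a,n-1-a}(\eta_a)$ as $-d'_{a+1,n-2-a}\bigl(d'_{a+2,n-2-a}(\xi_{a+2})\bigr)$, which vanishes by $d'\circ d'=0$ (at the initial step $a=n-2$ one uses the hypothesis $(\dagger_{n-1})$ and $d'_{n-1,0}\circ d'_{n,0}=0$). This yields all the $\xi_a$ and proves (1).

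For (2), I build a preimage $\zeta=\sum_{p=1}^n\zeta_p\in T_{n+1}$ with $\zeta_p\in C_{p,n+1-p}$ and $d_{n+1}(\zeta)=\sum_p\xi_p$. Comparing $C_{a,n-a}$-entries, this is equivalent to
\begin{equation*}
(-1)^a d''_{a,n+1-a}(\zeta_a)+d'_{a+1,n-a}(\zeta_{a+1})=\xi_a\qquad(0\le a\le n),\tag{$\ddagger_a$}
\end{equation*}
with the conventions $\zeta_0=\zeta_{n+1}=0$. I solve these by ascending induction. The base case $(\ddagger_0)$ is $d'_{1,n}(\zeta_1)=\xi_0$, solvable exactly because $\xi_0\in{\rm Im}\,d'_{1,n}$ by hypothesis. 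For the step, given $\zeta_a$ I put $\rho_a:=\xi_a-(-1)^a d''_{a,n+1-a}(\zeta_a)\in C_{a,n-a}$ and seek $\zeta_{a+1}$ with $d'_{a+1,n-a}(\zeta_{a+1})=\rho_a$; since $a\ge 1$ and the row $C_{\bullet,n-a}$ is acyclic, this is solvable once $\rho_a\in{\rm Ker}\,d'_{a,n-a}$. The main computation is to check $d'_{a,n-a}(\rho_a)=0$: the term $d'_{a,n-a}(\xi_a)$ is rewritten with the cycle relation $(\dagger_{a-1})$, while $d'_{a,n-a}d''_{a,n+1-a}(\zeta_a)$ is rewritten by commuting the two differentials and substituting $(\ddagger_{a-1})$, after which the two contributions cancel.

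It remains to see that the top relation $(\ddagger_n)$, i.e.\ $(-1)^n d''_{n,1}(\zeta_n)=\xi_n$, holds automatically. The element $\mu:=\xi_n-(-1)^n d''_{n,1}(\zeta_n)\in C_{n,0}$ is the only surviving component of $\sum_p\xi_p-d_{n+1}(\zeta)$, so $\mu\in{\rm Ker}\,d_n$; since $d''_{n,0}=0$ this reads $d'_{n,0}(\mu)=0$, and acyclicity of the bottom row $C_{\bullet,0}$ at the top spot gives $\mu\in{\rm Ker}\,d'_{n,0}={\rm Im}\,d'_{n+1,0}=0$. Hence $\sum_p\xi_p=d_{n+1}(\zeta)\in{\rm Im}\,d_{n+1}$, and reading the $C_{n,0}$-entry gives $\xi_n\in{\rm Im}\,d''_{n,1}$. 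The only real obstacle throughout is bookkeeping: keeping the signs $(-1)^p$ and the double indices consistent and, at each lifting step, confirming that the element to be lifted is genuinely a cycle for the relevant row or column so that acyclicity applies. Note that exactness is never invoked at a corner $p=0$ or $q=0$, so ``acyclic'' in the sense of vanishing positive homology is exactly what is consumed.
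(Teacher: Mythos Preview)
Your proof is correct and follows essentially the same staircase argument as the paper: part~(1) is a descending induction using exactness of the columns $C_{a,\bullet}$, and part~(2) is an ascending induction using exactness of the rows $C_{\bullet,n-a}$, with the same commutation and cancellation computations. The only cosmetic difference is that you isolate the top relation $(\ddagger_n)$ and dispose of it via $\mathrm{Ker}\,d'_{n,0}=0$, whereas the paper absorbs this into the final inductive step by noting $\mathrm{Im}\,d'_{n+1,0}=0$; these are the same observation.
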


\begin{proof}
(1)\,
It is enough to show that if $1 \leq p \leq n-1$ and two elements $\xi_{p+1} \in C_{p+1,n-p-1}$, $\xi_{p} \in C_{p,n-p}$ such that
\[
d'_{p+1,n-p-1}(\xi_{p+1}) = (-1)^{p+1} \cdot d''_{p,n-p}(\xi_{p})
\]
are given, then we can take $\xi_{p-1} \in C_{p-1,n-p+1}$ so that
\[
d'_{p,n-p}(\xi_{p}) = (-1)^{p} \cdot d''_{p-1,n-p+1}(\xi_{p-1}).
\]
In fact, if the assumption of the claim stated above is satisfied, we have
\begin{align*}
d''_{p-1,n-p}(d'_{p,n-p}(\xi_{p})) &= d'_{p,n-p-1}(d''_{p,n-p}(\xi_{p})) \\
                                   &= d'_{p,n-p-1}((-1)^{p+1} \cdot d'_{p+1,n-p-1}(\xi_{p+1})) \\
                                   &= 0,
\end{align*}
and so
\[
d'_{p,n-p}(\xi_{p}) \in {\rm Ker} \, d''_{p-1,n-p} = {\rm Im} \, d''_{p-1,n-p+1},
\]
which means the existence of the required element $\xi_{p-1}$.

(2)\,
We set $\eta_{0} = 0$. By the assumption, there exists $\eta_{1} \in C_{1,n}$ such that
\[
\xi_{0} = d'_{1,n}(\eta_{1}) = d'_{1,n}(\eta_{1}) + d''_{0,n+1}(\eta_{0}).
\]
Here we assume $0 \leq p \leq n-1$ and two elements $\eta_{p} \in C_{p,n-p+1}$, $\eta_{p+1} \in C_{p+1,n-p}$ such that
\[
\xi_{p} = d'_{p+1,n-p}(\eta_{p+1}) + (-1)^{p} \cdot d''_{p,n-p+1}(\eta_{p})
\]
are fixed.
We would like to find $\eta_{p+2} \in C_{p+2,n-p-1}$ such that
\[
\xi_{p+1} = d'_{p+2,n-p-1}(\eta_{p+2}) + (-1)^{p+1} \cdot d''_{p+1,n-p}(\eta_{p+1}).
\]
In fact, as we have
\begin{align*}
  & \; d'_{p+1,n-p-1}(\xi_{p+1} + (-1)^{p} \cdot d''_{p+1,n-p}(\eta_{p+1})) \\
= & \; d'_{p+1,n-p-1}(\xi_{p+1}) + (-1)^{p} \cdot d'_{p+1,n-p-1}(d''_{p+1,n-p}(\eta_{p+1})) \\
= & \; (-1)^{p+1} \cdot d''_{p,n-p}(\xi_{p}) + (-1)^{p} \cdot d''_{p,n-p}(d'_{p+1,n-p}(\eta_{p+1})) \\
= & \; (-1)^{p+1} \cdot d''_{p,n-p}(\xi_{p} - d'_{p+1,n-p}(\eta_{p+1})) \\
= & \; (-1)^{p+1} \cdot d''_{p,n-p}((-1)^{p} \cdot d''_{p,n-p+1}(\eta_{p})) \\
= & \; 0,
\end{align*}
it follows that
\[
\xi_{p+1} + (-1)^{p} \cdot d''_{p+1,n-p}(\eta_{p+1}) \in {\rm Ker} \, d'_{p+1,n-p-1} = {\rm Im} \, d'_{p+2,n-p-1}.
\]
Thus we see the existence of the required element $\eta_{p+2}$.
\end{proof}

\begin{lem}\label{3}
Suppose that
\[
0 \lra F \stackrel{\varphi}{\lra} G \stackrel{\psi}{\lra} H \stackrel{\rho}{\lra} L
\]
is an exact sequence of $R$-modules. Then the following assertions hold.
\begin{itemize}
\item[{\rm (1)}]
If there exists a homomorphism $\phi : G \lra F$ of $R$-modules such that $\phi \circ \varphi = {\rm id}_{F}$, then
\[
0 \lra \lsa{G} \stackrel{\lsa{\psi}}{\lra} H \stackrel{\rho}{\lra} L
\]
is exact, where $\lsa{G} = {\rm Ker} \, \phi$ and $\lsa{\psi}$ is the restriction of $\psi$ to $\lsa{G}$.
\item[{\rm (2)}]
If $F = \lsp{F} \oplus \lsa{F}$, $G = \lsp{G} \oplus \lsa{G}$, $\varphi(\lsp{F}) = \lsp{G}$ and $\varphi(\lsa{F}) \subseteq \lsa{G}$, then
\[
0 \lra \lsa{F} \stackrel{\lsa{\varphi}}{\lra} \lsa{G} \stackrel{\lsa{\psi}}{\lra} H \stackrel{\rho}{\lra} L
\]
is exact, where $\lsa{\varphi}$ and $\lsa{\psi}$ are the restrictions of $\varphi$ and $\psi$ to $\lsa{F}$ and $\lsa{G}$, respectively.
\end{itemize}
\end{lem}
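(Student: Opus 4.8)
The plan is to prove both parts by direct element-wise chases, using the exactness of the given four-term sequence together with the splitting data. Throughout, the two identities I will repeatedly invoke are ${\rm Ker}\,\psi = {\rm Im}\,\varphi$ and ${\rm Ker}\,\rho = {\rm Im}\,\psi$, i.e. exactness at $G$ and at $H$, together with the consequence $\psi \circ \varphi = 0$.

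For (1), the key observation is that $\phi \circ \varphi = {\rm id}_{F}$ forces the internal direct sum $G = \varphi(F) \oplus \lsa{G}$, where $\lsa{G} = {\rm Ker}\,\phi$. I would first check that $\lsa{\psi}$ is injective: if $g \in \lsa{G}$ and $\psi(g) = 0$, then $g \in {\rm Ker}\,\psi = {\rm Im}\,\varphi$, say $g = \varphi(f)$, and applying $\phi$ gives $f = \phi(g) = 0$, hence $g = 0$. For exactness at $H$, the inclusion ${\rm Im}\,\lsa{\psi} \subseteq {\rm Im}\,\psi = {\rm Ker}\,\rho$ is immediate; conversely, given $h \in {\rm Ker}\,\rho = {\rm Im}\,\psi$, write $h = \psi(g)$ and replace $g$ by its $\lsa{G}$-component $g - \varphi(\phi(g)) \in \lsa{G}$, which lies in $\lsa{G}$ because $\phi$ kills it, and which has the same image $h$ under $\psi$ because $\psi \circ \varphi = 0$.

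For (2), note first that $\varphi|_{\lsp{F}} : \lsp{F} \lra \lsp{G}$ is an isomorphism, being injective (a restriction of $\varphi$) and surjective by hypothesis, so the situation is structurally parallel to (1). Injectivity of $\lsa{\varphi}$ is immediate. For exactness at $\lsa{G}$, the inclusion ${\rm Im}\,\lsa{\varphi} \subseteq {\rm Ker}\,\lsa{\psi}$ follows from $\psi \circ \varphi = 0$; for the reverse, take $g \in \lsa{G}$ with $\psi(g) = 0$, write $g = \varphi(f)$ with $f = f' + f''$ where $f' \in \lsp{F}$ and $f'' \in \lsa{F}$, and observe that $\varphi(f') \in \lsp{G}$ while $\varphi(f'') \in \lsa{G}$; comparing components in $G = \lsp{G} \oplus \lsa{G}$ forces $\varphi(f') = 0$, whence $g = \lsa{\varphi}(f'')$. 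For exactness at $H$, given $h = \psi(g)$ with $g = g' + g''$ ($g' \in \lsp{G}$, $g'' \in \lsa{G}$), use $\varphi(\lsp{F}) = \lsp{G}$ to write $g' = \varphi(f')$, so that $\psi(g') = 0$ and $h = \psi(g'') = \lsa{\psi}(g'')$.

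The only place requiring genuine care is the reverse inclusion at $\lsa{G}$ in (2): the step where one decomposes the preimage $f$ and uses the \emph{directness} of $G = \lsp{G} \oplus \lsa{G}$ to annihilate the $\lsp{G}$-part. Everything else is a formal consequence of the two exactness identities and the fact that $\psi$ kills ${\rm Im}\,\varphi$. Indeed, both parts follow a single template: lift along $\psi$ or $\varphi$, split the lift according to the given decomposition, and discard the component that lands in ${\rm Im}\,\varphi = {\rm Ker}\,\psi$.
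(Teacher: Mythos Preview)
Your argument is correct. Every step checks out: the splitting $G = \varphi(F) \oplus {\rm Ker}\,\phi$ in (1), the component comparison in (2), and the exactness verifications at $H$ in both parts are all sound.

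As for comparison with the paper: the paper does not give its own proof of this lemma but simply cites \cite[2.3]{fin1}. Your self-contained element chase is therefore more than the paper itself supplies, and it is exactly the sort of routine splitting argument one would expect behind that citation. There is nothing to correct.
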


\begin{proof}
See \cite[2.3]{fin1}.
\end{proof}

\section{$\ast$-transform}

Let $2 \leq n \in \mathbb{Z}$ and let $R$ be an $n$-dimensional Cohen-Macaulay local ring with the maximal ideal $\mathfrak{m}$. Suppose that an acyclic complex
\[
0 \lra F_{n} \stackrel{\varphi_{n}}{\lra} F_{n-1} \lra \dotsb \lra F_{1} \stackrel{\varphi_{1}}{\lra} F_{0}
\]
of finitely generated free $R$-modules such that ${\rm Im} \, \varphi_{n} \subseteq QF_{n-1}$ is given, where $Q = (x_{1}, x_{2}, \dots, x_{n})R$ is a parameter ideal of $R$.
We put $M = {\rm Im} \, \varphi_{1}$, which is an $R$-submodule of $F_{0}$.
In this section, transforming $F_{\bullet}$ suitably, we aim to construct an acyclic complex
\[
0 \lra \lsa{F_{n}} \stackrel{\lsa{\varphi_{n}}}{\lra} \lsa{F_{n-1}} \lra \dotsb \lra \lsa{F_{1}} \stackrel{\lsa{\varphi_{1}}}{\lra} \lsa{F_{0}} = F_{0}
\]
of finitely generated free $R$-modules such that ${\rm Im} \, \lsa{\varphi_{n}} \subseteq \mathfrak{m} \cdot \lsa{F_{n-1}}$ and ${\rm Im} \, \lsa{\varphi_{1}} = M :_{F_{0}} Q$.
Let us call $\lsa{F_{\bullet}}$ the $\ast$-transform of $F_{\bullet}$ with respect to $x_{1}, x_{2}, \dots, x_{n}$.

In this operation, we use the Koszul complex $K_{\bullet} = K_{\bullet}(x_{1}, x_{2}, \dots, x_{n})$.
We denote the boundary map of $K_{\bullet}$ by $\partial_{\bullet}$.
Let $e_{1}, e_{2}, \dots, e_{n}$ be an $R$-free basis of $K_{1}$ such that $\partial_{1}(e_{i}) = x_{i}$ for $1 \leq \forall i \leq n$.
Moreover, we use the following notation:
\begin{itemize}
\item
$N := \{ 1, 2, \dots, n \}$.
\item
$N_{p} := \{ I \subseteq N \, | \, \sharp \, I = p \}$ for $1 \leq \forall p \leq n$ and $N_{0} := \{ \emptyset \}$.
\item
If $1 \leq p \leq n$ and $I = \{ i_{1}, i_{2}, \dots, i_{p} \} \in N_{p}$, where $1 \leq i_{1} < i_{2} < \dots < i_{p} \leq n$, we set
\[
e_{I} = e_{i_{1}} \wedge e_{i_{2}} \wedge \dots \wedge e_{i_{p}} \in K_{p}.
\]
In particular, for $1 \leq \forall i \leq n$, $\check{e}_{i} := e_{N \setminus \{ i \}}$.
Furthermore, $e_{\emptyset}$ denotes the identity element $1_{R}$ of $R = K_{0}$.
\item
If $1 \leq p \leq n$, $I \in N_{p}$ and $i \in I$, we set
\[
s(i, I) = \sharp \, \{ j \in I \, | \, j < i \}.
\]
We define $\sharp \, \emptyset = 0$, so $s(i, I) = 0$ if $i = \min I$.
\end{itemize}
Then, for $0 \leq \forall p \leq n$, $\{ e_{I} \}_{I \in N_{p}}$ is an $R$-free basis of $K_{p}$ and
\[
\partial_{p}(e_{I}) = \sum_{i \in I} (-1)^{s(i, I)} \cdot x_{i} \cdot e_{I \setminus \{ i \}}.
\]
As $x_{1}, x_{2}, \dots, x_{n}$ is an $R$-regular sequence, $K_{\bullet}$ gives an $R$-free resolution of $R/Q$.
Hence, for any $R$-module $N$, we have the following commutative diagram;
\[
\begin{array}{cccccccc}
\! {\rm Hom}_{R}(K_{n-1}, N) \! & \! \stackrel{{\rm Hom}_{R}(\partial_{n}, N)}{\lra} \! & \! {\rm Hom}_{R}(K_{n}, N) \! & \! \lra \! & \! {\rm Ext}_{R}^{n}(R/Q, N) \! & \! \lra \! & \! 0 \! & {\rm (ex)} \\
\big\downarrow\vcenter{\rlap{$\scriptstyle{\cong}$}} & & \big\downarrow\vcenter{\rlap{$\scriptstyle{\cong}$}} & & & & & \\
N^{\oplus n} & \! \! \! \! \! \! \! \! \! \! \! \! \stackrel{(x_{1} \,\, -x_{2} \,\, \dotsb \,\, (-1)^{n-1} \cdot x_{n})}{\lra} \! \! \! \! \! \! \! \! \! \! \! \! & N & \! \lra \! & N/QN & \! \lra \! & \! 0 \! & {\rm (ex)} \rlap{,}
\end{array}
\]
which implies ${\rm Ext}_{R}^{n}(R/Q, N) \cong N/QN$. Using this fact, we show the next result.

\begin{thm}\label{4}
$(M :_{F_{0}} Q)/M \cong F_{n}/QF_{n}$.
\end{thm}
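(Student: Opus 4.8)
The plan is to identify both sides with Ext-modules over $R/Q$ and to connect them by dimension shifting along $F_{\bullet}$. First I would observe that an $R$-homomorphism $R/Q \lra F_{0}/M$ is determined by the image of $\bar{1}$, which must be an element killed by $Q$, so
\[
(M :_{F_{0}} Q)/M = {\rm Hom}_{R}(R/Q, F_{0}/M) = {\rm Ext}_{R}^{0}(R/Q, F_{0}/M).
\]
On the other side, the isomorphism ${\rm Ext}_{R}^{n}(R/Q, N) \cong N/QN$ recalled just above (with $N = F_{n}$) identifies $F_{n}/QF_{n}$ with ${\rm Ext}_{R}^{n}(R/Q, F_{n})$. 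Thus it suffices to construct an isomorphism ${\rm Ext}_{R}^{0}(R/Q, F_{0}/M) \cong {\rm Ext}_{R}^{n}(R/Q, F_{n})$.

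The key auxiliary fact is that ${\rm Ext}_{R}^{i}(R/Q, F) = 0$ for every finitely generated free $R$-module $F$ and every $i$ with $0 \le i < n$. Computing with the Koszul resolution $K_{\bullet}$ and using its self-duality ${\rm Hom}_{R}(K_{p}, R) \cong K_{n-p}$ gives ${\rm Ext}_{R}^{i}(R/Q, R) \cong H_{n-i}(K_{\bullet})$, and the latter vanishes for $i < n$ because $x_{1}, \dots, x_{n}$ is an $R$-regular sequence; the degree $n$ part is exactly the isomorphism ${\rm Ext}_{R}^{n}(R/Q, R) \cong R/Q$ recalled above.

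Next I would decompose $F_{\bullet}$ into short exact sequences of syzygies. Writing $M_{1} = M$ and $M_{k} = {\rm Im}\, \varphi_{k}$ for $1 \le k \le n$, one has $M_{n} \cong F_{n}$ via $\varphi_{n}$, together with the short exact sequences $0 \lra M \lra F_{0} \lra F_{0}/M \lra 0$ and $0 \lra M_{k+1} \lra F_{k} \lra M_{k} \lra 0$ for $1 \le k \le n-1$. Feeding these into the long exact sequences of ${\rm Ext}_{R}(R/Q, -)$ and invoking the vanishing of ${\rm Ext}_{R}^{i}(R/Q, F_{k})$ for $i < n$, each relevant connecting homomorphism becomes an isomorphism, giving
\[
{\rm Ext}_{R}^{0}(R/Q, F_{0}/M) \cong {\rm Ext}_{R}^{1}(R/Q, M_{1}) \cong \dots \cong {\rm Ext}_{R}^{n-1}(R/Q, M_{n-1}).
\]

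The final step, which I expect to be the crux, uses $0 \lra M_{n} \lra F_{n-1} \lra M_{n-1} \lra 0$. Since ${\rm Ext}_{R}^{n-1}(R/Q, F_{n-1}) = 0$, its long exact sequence yields
\[
{\rm Ext}_{R}^{n-1}(R/Q, M_{n-1}) \cong {\rm Ker}\left( {\rm Ext}_{R}^{n}(R/Q, M_{n}) \lra {\rm Ext}_{R}^{n}(R/Q, F_{n-1}) \right),
\]
where the displayed map is induced by the inclusion $M_{n} \hookrightarrow F_{n-1}$. The decisive point is that the isomorphism ${\rm Ext}_{R}^{n}(R/Q, -) \cong (-)/Q(-)$ is natural, so under $M_{n} \cong F_{n}$ this map is precisely the reduction $\bar{\varphi}_{n} : F_{n}/QF_{n} \lra F_{n-1}/QF_{n-1}$ of $\varphi_{n}$ modulo $Q$. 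By the hypothesis ${\rm Im}\, \varphi_{n} \subseteq QF_{n-1}$ we get $\bar{\varphi}_{n} = 0$, so the kernel is all of ${\rm Ext}_{R}^{n}(R/Q, M_{n}) \cong F_{n}/QF_{n}$; chaining the isomorphisms gives $(M :_{F_{0}} Q)/M \cong F_{n}/QF_{n}$. The care needed is in verifying the naturality that turns the induced map into $\bar{\varphi}_{n}$, and in keeping the degree bookkeeping within the range $i < n$ where the free modules have vanishing Ext (when $n = 2$ the displayed chain is empty and one passes directly to this final step).
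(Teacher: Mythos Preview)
Your proof is correct and follows essentially the same route as the paper: dimension-shift along the syzygies of $F_{\bullet}$ using the vanishing of ${\rm Ext}_{R}^{i}(R/Q, F_{k})$ for $i<n$, then use the final short exact sequence together with the naturality of ${\rm Ext}_{R}^{n}(R/Q,-)\cong(-)/Q(-)$ to see that the induced map is $\overline{\varphi_{n}}=0$. Your notation $M_{k}$ coincides with the paper's $L_{k}$, and your explicit remark on Koszul self-duality supplies what the paper leaves implicit.
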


\begin{proof}
We put $L_{0} = F_{0}/M$.
Moreover, for $1 \leq \forall p \leq n-1$, we put $L_{p} = {\rm Im} \, \varphi_{p} \subseteq F_{p-1}$ and consider the exact sequence
\[
0 \lra L_{p} \lra F_{p-1} \stackrel{\varphi_{p-1}}{\lra} L_{p-1} \lra 0,
\]
where $\varphi_{0} : F_{0} \lra L_{0}$ is the canonical surjection. Because
\[
{\rm Ext}_{R}^{p-1}(R/Q, F_{p-1}) = {\rm Ext}_{R}^{p}(R/Q, F_{p-1}) = 0,
\]
we get
\[
{\rm Ext}_{R}^{p}(R/Q, L_{p}) \cong {\rm Ext}_{R}^{p-1}(R/Q, L_{p-1}).
\]
Therefore ${\rm Ext}_{R}^{n-1}(R/Q, L_{n-1}) \cong {\rm Hom}_{R}(R/Q, F_{0}/M) \cong (M :_{F_{0}} Q)/M$.
Next, we look at the exact sequence
\[
0 \lra F_{n} \stackrel{\varphi_{n}}{\lra} F_{n-1} \stackrel{\varphi_{n-1}}{\lra} L_{n-1} \lra 0,
\]
which yields the following commutative diagram;
\[
\begin{array}{cccccccc}
\! 0 \! & \! \lra \! & \! {\rm Ext}_{R}^{n-1}(R/Q, L_{n-1}) \! & \! \lra \! & \! {\rm Ext}_{R}^{n}(R/Q, F_{n}) \! & \! \stackrel{\widetilde{\varphi_{n}}}{\lra} \! & \! {\rm Ext}_{R}^{n}(R/Q, F_{n-1}) \! & {\rm (ex)} \\
 & & & & \big\downarrow\vcenter{\rlap{$\scriptstyle{\cong}$}} & & \big\downarrow\vcenter{\rlap{$\scriptstyle{\cong}$}} \\
 & & & & F_{n}/QF_{n} & \! \stackrel{\overline{\varphi_{n}}}{\lra} \! & F_{n-1}/QF_{n-1} \rlap{,}
\end{array}
\]
where $\widetilde{\varphi_{n}}$ and $\overline{\varphi_{n}}$ denote the maps induced from $\varphi_{n}$.
Let us notice $\overline{\varphi_{n}} = 0$ as ${\rm Im} \, \varphi_{n} \subseteq QF_{n-1}$. Hence
\[
{\rm Ext}_{R}^{n-1}(R/Q, L_{n-1}) \cong F_{n}/QF_{n},
\]
and so the required isomorphism follows.
\end{proof}

Let us fix an $R$-free basis of $F_{n}$, say $\{ v_{\lambda} \}_{\lambda \in \Lambda}$.
We set $\widetilde{\Lambda} = \Lambda \times N$ and take a family $\{ v_{(\lambda, i)} \}_{(\lambda, i) \in \widetilde{\Lambda}}$ of elements in $F_{n-1}$ so that
\[
\varphi_{n}(v_{\lambda}) = \sum_{i \in N} x_{i} \cdot v_{(\lambda, i)}
\]
for $\forall \lambda \in \Lambda$.
This is possible as ${\rm Im} \, \varphi_{n} \subseteq QF_{n-1}$.
The next result is the essential part of the process to get $\lsa{F_{\bullet}}$.

\begin{thm}\label{5}
There exists a chain map $\sigma_{\bullet} : F_{n} \otimes_{R} K_{\bullet} \lra F_{\bullet}$
\[
\begin{array}{ccccccccccc}
\! 0 \! & \! \lra \! & \! F_{n} \otimes_{R} K_{n} \! & \! \stackrel{F_{n} \otimes \partial_{n}}{\lra} \! & \! F_{n} \otimes_{R} K_{n-1} \! & \! \lra \! & \! \dotsb \! & \! \lra \! & \! F_{n} \otimes_{R} K_{1} \! & \! \stackrel{F_{n} \otimes \partial_{1}}{\lra} \! & \! F_{n} \otimes_{R} K_{0} \! \\
 & & \big\downarrow\vcenter{\rlap{$\scriptstyle{\sigma_{n}}$}} & & \big\downarrow\vcenter{\rlap{$\scriptstyle{\sigma_{n-1}}$}} & & & & \big\downarrow\vcenter{\rlap{$\scriptstyle{\sigma_{1}}$}} & & \big\downarrow\vcenter{\rlap{$\scriptstyle{\sigma_{0}}$}} \\
\! 0 \! & \! \lra \! & F_{n} & \! \stackrel{\varphi_{n}}{\lra} \! & F_{n-1} & \! \lra \! & \! \dotsb \! & \! \lra \! & F_{1} & \! \stackrel{\varphi_{1}}{\lra} \! & F_{0}
\end{array}
\]
satisfying the following conditions.
\begin{itemize}
\item[{\rm (1)}]
$\sigma_{0}^{-1}({\rm Im} \, \varphi_{1}) = {\rm Im}(F_{n} \otimes \partial_{1})$.
\item[{\rm (2)}]
${\rm Im} \, \sigma_{0} + {\rm Im} \, \varphi_{1} = M :_{F_{0}} Q$.
\item[{\rm (3)}]
$\sigma_{n-1}(v_{\lambda} \otimes \check{e}_{i}) = (-1)^{n+i-1} \cdot v_{(\lambda, i)}$ for $\forall (\lambda, i) \in \widetilde{\Lambda}$.
\item[{\rm (4)}]
$\sigma_{n}(v_{\lambda} \otimes e_{N}) = (-1)^{n} \cdot v_{\lambda}$ for $\forall \lambda \in \Lambda$.
\end{itemize}
\end{thm}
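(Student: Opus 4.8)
The plan is to pin down the top two components of $\sigma_{\bullet}$ by the prescribed formulas, to extend the chain map downward by a vanishing-of-obstruction argument, and finally to read off (1) and (2) from the map induced in degree $0$. Since $F_{n}$ is free on $\{ v_{\lambda} \}_{\lambda \in \Lambda}$, one has $F_{n} \otimes_{R} K_{\bullet} \cong \bigoplus_{\lambda} K_{\bullet}$, so $\sigma_{\bullet}$ amounts to one chain map $K_{\bullet} \lra F_{\bullet}$ per $\lambda$; I will nevertheless work with $F_{n} \otimes_{R} K_{\bullet}$ throughout. First I would \emph{define} $\sigma_{n}$ and $\sigma_{n-1}$ by the formulas in (4) and (3) and verify that the top square commutes. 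Because $s(i, N) = i-1$ and $e_{N \setminus \{ i \}} = \check{e}_{i}$, we have
\[
\sigma_{n-1} \bigl( (F_{n} \otimes \partial_{n})(v_{\lambda} \otimes e_{N}) \bigr) = \sum_{i \in N} (-1)^{i-1} x_{i} \cdot \sigma_{n-1}(v_{\lambda} \otimes \check{e}_{i}) = (-1)^{n} \sum_{i \in N} x_{i} \cdot v_{(\lambda, i)} = (-1)^{n} \varphi_{n}(v_{\lambda}),
\]
which matches $\varphi_{n}(\sigma_{n}(v_{\lambda} \otimes e_{N})) = (-1)^{n} \varphi_{n}(v_{\lambda})$; so conditions (3) and (4) are exactly what makes the chain-map relation hold in degree $n$.

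Next I would construct $\sigma_{n-2}, \dots, \sigma_{0}$ by descending induction, assuming $\sigma_{q}$ built for $q \geq p+1$ with the chain-map relation holding in degrees $\geq p+2$. The map $\beta := \varphi_{p+1} \sigma_{p+1} : F_{n} \otimes_{R} K_{p+1} \lra F_{p}$ kills ${\rm Ker}(F_{n} \otimes \partial_{p+1}) = {\rm Im}(F_{n} \otimes \partial_{p+2})$, because on that submodule $\beta$ equals $\varphi_{p+1} \varphi_{p+2} \sigma_{p+2} = 0$ by the inductive relation in degree $p+2$ together with $\varphi_{p+1} \varphi_{p+2} = 0$. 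Hence $\beta$ factors through a homomorphism $\bar{\beta} : \Omega^{p+1} \lra F_{p}$, where $\Omega^{p+1} = {\rm Im}(F_{n} \otimes \partial_{p+1}) = {\rm Ker}(F_{n} \otimes \partial_{p}) \subseteq F_{n} \otimes_{R} K_{p}$. What I need is to extend $\bar{\beta}$ to all of $F_{n} \otimes_{R} K_{p}$ and set $\sigma_{p}$ equal to such an extension; then $\sigma_{p}(F_{n} \otimes \partial_{p+1}) = \beta = \varphi_{p+1} \sigma_{p+1}$, as required.

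The main obstacle is precisely this extension, and it is where the Cohen-Macaulay hypothesis enters. Applying ${\rm Hom}_{R}(-, F_{p})$ to $0 \lra \Omega^{p+1} \lra F_{n} \otimes_{R} K_{p} \lra \Omega^{p} \lra 0$, the obstruction to extending $\bar{\beta}$ lies in ${\rm Ext}_{R}^{1}(\Omega^{p}, F_{p})$. Since $F_{n} \otimes_{R} K_{\bullet}$ is an $R$-free resolution of $F_{n}/QF_{n}$, dimension shifting gives ${\rm Ext}_{R}^{1}(\Omega^{p}, F_{p}) \cong {\rm Ext}_{R}^{p+1}(F_{n}/QF_{n}, F_{p})$, and as $F_{n}/QF_{n}$ is a direct sum of copies of $R/Q$ and $F_{p}$ is free, this is a direct sum of copies of ${\rm Ext}_{R}^{p+1}(R/Q, R)$. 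Because $x_{1}, \dots, x_{n}$ is an $R$-regular sequence, ${\rm Ext}_{R}^{j}(R/Q, R)$ vanishes for $j < n$ (the vanishing already used in the proof of Theorem \ref{4}); as $p \leq n-2$ forces $p+1 < n$, the obstruction group is $0$ and the desired extension exists at every stage, down to $\sigma_{0}$.

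Finally, (1) and (2) would be extracted from the induced map $\bar{\sigma}_{0} : F_{n}/QF_{n} \lra F_{0}/M$. Since ${\rm Im}(F_{n} \otimes \partial_{1}) = QF_{n}$, condition (1) says exactly that $\bar{\sigma}_{0}$ is injective, and (2) says ${\rm Im} \, \bar{\sigma}_{0} = (M :_{F_{0}} Q)/M$; together these assert that $\bar{\sigma}_{0}$ is an isomorphism onto $(M :_{F_{0}} Q)/M$. The inclusion ${\rm Im} \, \bar{\sigma}_{0} \subseteq (M :_{F_{0}} Q)/M$ is immediate: the degree-$1$ relation gives $x_{i} \cdot \sigma_{0}(v_{\lambda}) = \varphi_{1} \sigma_{1}(v_{\lambda} \otimes e_{i}) \in M$, so $\sigma_{0}(v_{\lambda}) \in M :_{F_{0}} Q$. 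For the rest I would identify $\bar{\sigma}_{0}$ with the isomorphism of Theorem \ref{4}: the chain map $\sigma_{\bullet}$, normalized by (3) and (4), is the comparison map realizing the chain of connecting isomorphisms ${\rm Ext}_{R}^{n}(R/Q, F_{n}) \cong \dots \cong {\rm Ext}_{R}^{n-1}(R/Q, L_{n-1}) \cong (M :_{F_{0}} Q)/M$ used there, so $\bar{\sigma}_{0}$ coincides with that isomorphism up to sign, and (1), (2) follow. Alternatively, once surjectivity of $\bar{\sigma}_{0}$ onto $(M :_{F_{0}} Q)/M$ is established, Theorem \ref{4} together with the fact that a surjection between isomorphic finitely generated modules over a Noetherian ring is bijective finishes the argument.
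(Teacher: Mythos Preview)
Your construction of the chain map via Ext-vanishing is correct and cleaner than the paper's route. The paper works instead inside the total complex of the double complex $F_{\bullet}\otimes_R K_{\bullet}$: it packages $w_{(\lambda,N)}$ and $w_{(\lambda,N\setminus\{i\})}$ into elements $\xi_n(\lambda),\xi_{n-1}(\lambda)$ of $T_n$, invokes Lemma~\ref{2}(1) to extend to a cycle $\sum_p\xi_p(\lambda)\in\ker d_n$, and then reads off the $w_{(\lambda,I)}$ from the components, with explicit sign bookkeeping via an auxiliary function $t(I)$. Your obstruction-theoretic argument avoids all of that; since only $\sigma_n$ and $\sigma_{n-1}$ are used explicitly later in the paper, nothing is lost by not having closed formulas for the lower $\sigma_p$.

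The approaches diverge more sharply at (1). The paper proves (1) by a second double-complex chase (Lemma~\ref{2}(2)): given $\eta_n\in F_n\otimes K_0$ with $\sigma_0(\eta_n)\in\operatorname{Im}\varphi_1$, it completes $\eta_n$ to a cycle $\sum_p\eta_p\in\ker d_n$ whose $F_0\otimes K_n$-component lies in $\operatorname{Im}(\varphi_1\otimes K_n)$, and Lemma~\ref{2}(2) then forces $\eta_n\in\operatorname{Im}(F_n\otimes\partial_1)$. From (1) and the inclusion $\operatorname{Im}\bar\sigma_0\subseteq(M:_{F_0}Q)/M$ the paper gets (2) exactly as you say, by Theorem~\ref{4} and a finite-length count.

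Your alternative for (1)--(2), identifying $\bar\sigma_0$ with the isomorphism of Theorem~\ref{4}, is valid but not free: the sentence ``$\sigma_\bullet$ is the comparison map realizing the chain of connecting isomorphisms'' is precisely what has to be checked. Concretely, one verifies by induction that the connecting map $\delta^{(p)}:\operatorname{Ext}^{p-1}(R/Q,L_{p-1})\to\operatorname{Ext}^{p}(R/Q,L_{p})$ carries the class of $\varphi_{p-1}\sigma_{p-1}$ to that of $\varphi_{p}\sigma_{p}$ (use $\sigma_{p-1}$ as the lift and the chain-map relation $\sigma_{p-1}(F_n\otimes\partial_p)=\varphi_p\sigma_p$), and that the last step lands on $\pm v_\lambda+QF_n$ via (4). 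This is routine, but it is the content behind your assertion; without it, your ``alternative'' paragraph does not stand on its own, since you have established only the inclusion $\operatorname{Im}\bar\sigma_0\subseteq(M:_{F_0}Q)/M$, not surjectivity.
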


\begin{proof}
Let us notice that, for $0 \leq \forall p \leq n$, $\{ v_{\lambda} \otimes e_{I} \}_{(\lambda, I) \in \Lambda \times N_{p}}$ is an $R$-free basis of $F_{n} \otimes_{R} K_{p}$,
so $\sigma_{p} : F_{n} \otimes_{R} K_{p} \lra F_{p}$ can be defined by choosing suitable element $w_{(\lambda, I)} \in F_{p}$
that corresponds to $v_{\lambda} \otimes e_{I}$ for $\forall (\lambda, I) \in \Lambda \times N_{p}$.
We set $w_{(\lambda, N)} = (-1)^{n} \cdot v_{\lambda}$ for $\forall \lambda \in \Lambda$
and $w_{(\lambda, N \setminus \{ i \})} = (-1)^{n+i-1} \cdot v_{(\lambda, i)}$ for $\forall (\lambda, i) \in \widetilde{\Lambda}$. Then
\begin{align*}
\varphi_{n}(w_{(\lambda, N)}) & = (-1)^{n} \cdot \varphi_{n}(v_{\lambda}) \\
                              & = (-1)^{n} \cdot \sum_{i \in N} x_{i} \cdot v_{(\lambda, i)} \\
                              & = \sum_{i \in N} (-1)^{s(i, N)} \cdot x_{i} \cdot w_{(\lambda, N \setminus \{ i \})}.
\end{align*}
Moreover, we can take families $\{ w_{(\lambda, I)} \}_{(\lambda, I) \in \Lambda \times N_{p}}$ of elements in $F_{p}$ for $0 \leq \forall p \leq n-2$ so that
\[
\varphi_{p}(w_{(\lambda, I)}) = \sum_{i \in I} (-1)^{s(i, I)} \cdot x_{i} \cdot w_{(\lambda, I \setminus \{ i \})} \tag{$\sharp$}
\]
for $1 \leq \forall p \leq n$ and $\forall (\lambda, I) \in \Lambda \times N_{p}$.
If this is true, an $R$-linear map $\sigma_{p} : F_{n} \otimes_{R} K_{p} \lra F_{p}$ is defined by setting $\sigma_{p}(v_{\lambda} \otimes e_{I}) = w_{(\lambda, I)}$ for $\forall (\lambda, I) \in \Lambda \times N_{p}$ and $\sigma_{\bullet} : F_{n} \otimes_{R} K_{\bullet} \lra F_{\bullet}$ becomes a chain map satisfying (3) and (4).

In order to see the existence of $\{ w_{(\lambda, I)} \}_{(\lambda, I) \in \Lambda \times N_{p}}$, let us consider the double complex $F_{\bullet} \otimes_{R} K_{\bullet}$.
\[
\begin{array}{ccccccc}
 & & \vdots & & \vdots & & \\
 & & \big\downarrow & & \big\downarrow & & \\
\! \dotsb \! & \! \lra \! & \! F_{p} \otimes_{R} K_{q} \! & \! \stackrel{\varphi_{p} \otimes K_{q}}{\lra} \! & \! F_{p-1} \otimes_{R} K_{q} \! & \! \lra \! & \! \dotsb \! \\
 & & \big\downarrow\vcenter{\rlap{$\scriptstyle{F_{p} \otimes \partial_{q}}$}} & & \big\downarrow\vcenter{\rlap{$\scriptstyle{F_{p-1} \otimes \partial_{q}}$}} & & \\
\! \dotsb \! & \! \lra \! & \! F_{p} \otimes_{R} K_{q-1} \! & \! \stackrel{\varphi_{p} \otimes K_{q-1}}{\lra} \! & \! F_{p-1} \otimes_{R} K_{q-1} \! & \! \lra \! & \! \dotsb \! \\
 & & \big\downarrow & & \big\downarrow & & \\
 & & \vdots & & \vdots & &
\end{array}
\]
We can take it as $C_{\bullet \bullet}$ of \ref{2}. Let $T_{\bullet}$ be the total complex and $d_{\bullet}$ be its boundary map. In particular, we have
\[
T_{n} = (F_{n} \otimes_{R} K_{0}) \oplus (F_{n-1} \otimes_{R} K_{1}) \oplus \dots \oplus (F_{1} \otimes_{R} K_{n-1}) \oplus (F_{0} \otimes_{R} K_{n}).
\]
For $\forall I \subseteq N$, we define
\[
t(I) = \begin{cases}
        \displaystyle \sum_{i \in I} (i-1) & \text{if $I \neq \emptyset$}, \\
        \\
        0                                  & \text{if $I = \emptyset$}.
        \end{cases}
\]
For a while, we fix $\lambda \in \Lambda$ and set
\begin{align*}
\xi_{n}(\lambda)   &= (-1)^{\frac{n(n+1)}{2}} \cdot (-1)^{t(N)} \cdot w_{(\lambda, N)} \otimes e_{\emptyset} \in F_{n} \otimes_{R} K_{0}, \\
\xi_{n-1}(\lambda) &= (-1)^{\frac{(n-1)n}{2}} \cdot \sum_{i \in N} (-1)^{t(N \setminus \{ i \})} \cdot w_{(\lambda, N \setminus \{ i \})} \otimes e_{i} \in F_{n-1} \otimes_{R} K_{1}.
\end{align*}
It is easy to see that
\[
\xi_{n}(\lambda) = v_{\lambda} \otimes e_{\emptyset}
\]
since $t(N) = (n-1)n/2$ and $n^{2} + n \equiv 0 \pmod{2}$. Moreover, we have
\[
\xi_{n-1}(\lambda) = (-1)^{n} \cdot \sum_{i \in N} v_{(\lambda, i)} \otimes e_{i}
\]
since $t(N \setminus \{ i \}) = (n-1)n/2 - (i-1)$. Then
\begin{align*}
(\varphi_{n} \otimes K_{0})(\xi_{n}(\lambda)) &= \varphi_{n}(v_{\lambda}) \otimes e_{\emptyset} \\
                                              &= (\sum_{i \in N} x_{i} \cdot v_{(\lambda, i)}) \otimes e_{\emptyset} \\
                                              &= \sum_{i \in N} v_{(\lambda, i)} \otimes x_{i} \\
                                              &= (F_{n-1} \otimes \partial_{1})(\sum_{i \in N} v_{(\lambda, i)} \otimes e_{i}) \\
                                              &= (-1)^{n} \cdot (F_{n-1} \otimes \partial_{1})(\xi_{n-1}(\lambda)).
\end{align*}
Hence, by (1) of \ref{2} there exist elements $\xi_{p}(\lambda) \in F_{p} \otimes K_{n-p}$ for $0 \leq \forall p \leq n-2$ such that
\[
\xi_{n}(\lambda) + \xi_{n-1}(\lambda) + \xi_{n-2}(\lambda) + \dots + \xi_{0}(\lambda) \in {\rm Ker} \, d_{n} \subseteq T_{n},
\]
which means
\[
(\varphi_{p} \otimes K_{n-p})(\xi_{p}(\lambda)) = (-1)^{p} \cdot (F_{p-1} \otimes \partial_{n-p+1})(\xi_{p-1}(\lambda))
\]
for $1 \leq \forall p \leq n$. Let us denote $N \setminus I$ by $I^{\text{c}}$ for $\forall I \subseteq N$.
Because $\{ e_{I^{\text{c}}} \}_{I \in N_{p}}$ is an $R$-free basis of $K_{n-p}$, it is possible to write
\[
\xi_{p}(\lambda) = (-1)^{\frac{p(p+1)}{2}} \cdot \sum_{I \in N_{p}} (-1)^{t(I)} \cdot w_{(\lambda, I)} \otimes e_{I^{\text{c}}}
\]
for $0 \leq \forall p \leq n-2$ (Notice that $\xi_{n}(\lambda)$ and $\xi_{n-1}(\lambda)$ are defined so that they satisfy the same equalities), where $w_{(\lambda, I)} \in F_{p}$. Then we have
\[
(\varphi_{p} \otimes K_{n-p})(\xi_{p}(\lambda)) = (-1)^{\frac{p(p+1)}{2}} \cdot \sum_{I \in N_{p}} (-1)^{t(I)} \cdot \varphi_{p}(w_{(\lambda, I)}) \otimes e_{I^{\text{c}}}.
\]
On the other hand,
\begin{multline*}
(-1)^{p} \cdot (F_{p-1} \otimes \partial_{n-p+1})(\xi_{p-1}(\lambda)) \\
= (-1)^{p} \cdot (-1)^{\frac{(p-1)p}{2}} \cdot \sum_{J \in N_{p-1}} \{ (-1)^{t(J)} \cdot w_{(\lambda, J)} \otimes (\sum_{i \in J^{\text{c}}} (-1)^{s(i, J^{\text{c}})} \cdot x_{i} \cdot e_{J^{\text{c}} \setminus \{ i \}}) \}.
\end{multline*}
Here we notice that if $I \in N_{p}$, $J \in N_{p-1}$ and $i \in N$, then
\[
I^{\text{c}} = J^{\text{c}} \setminus \{ i \} \quad \Longleftrightarrow \quad I = J \cup \{ i \}.
\]
Hence we get
\begin{multline*}
(-1)^{p} \cdot (F_{p-1} \otimes \partial_{n-p+1})(\xi_{p-1}(\lambda)) \\
= (-1)^{\frac{p(p+1)}{2}} \cdot \sum_{I \in N_{p}} \{ (\sum_{i \in I} (-1)^{t(I \setminus \{ i \}) + s(i, I^{\text{c}} \cup \{ i \})} \cdot x_{i} \cdot w_{(\lambda, I \setminus \{ i \})}) \otimes e_{I^{\text{c}}} \}.
\end{multline*}
For $\forall I \in N_{p}$ and $\forall i \in I$, we have
\begin{gather*}
t(I \setminus \{ i \}) = t(I) - (i - 1), \\
s(i, I) + s(i, I^{\text{c}} \cup \{ i \}) = s(i, N) = i - 1,
\end{gather*}
and so
\begin{align*}
t(I \setminus \{ i \}) + s(i, I^{\text{c}} \cup \{ i \}) &= t(I) - s(i, I) \\
                                                         &\equiv t(I) + s(i, I) \pmod{2}.
\end{align*}
Therefore we see that the required equality ($\sharp$) holds for $\forall I \in N_{p}$.

Let us prove (1). We have to show $\sigma_{0}^{-1}(\text{Im} \, \varphi_{1}) \subseteq \text{Im}(F_{n} \otimes \partial_{1})$.
Take $\forall \eta_{n} \in F_{n} \otimes_{R} K_{0}$ such that $\sigma_{0}(\eta_{n}) \in \text{Im} \, \varphi_{1}$.
As $\{ \xi_{n}(\lambda) \}_{\lambda \in \Lambda}$ is an $R$-free basis of $F_{n} \otimes_{R} K_{0}$, we can express
\[
\eta_{n} = \sum_{\lambda \in \Lambda} a_{\lambda} \cdot \xi_{n}(\lambda) = \sum_{\lambda \in \Lambda} a_{\lambda} \cdot (v_{\lambda} \otimes e_{\emptyset}),
\]
where $a_{\lambda} \in R$ for $\forall \lambda \in \Lambda$. Then we have
\[
\sum_{\lambda \in \Lambda} a_{\lambda} \cdot w_{(\lambda, \emptyset)} = \sum_{\lambda \in \Lambda} a_{\lambda} \cdot \sigma_{0}(v_{\lambda} \otimes e_{\emptyset}) = \sigma_{0}(\eta_{n}) \in \text{Im} \, \varphi_{1}.
\]
Now we set
\[
\eta_{p} = \sum_{\lambda \in \Lambda} a_{\lambda} \cdot \xi_{p}(\lambda) \in F_{p} \otimes_{R} K_{n-p}
\]
for $0 \leq \forall p \leq n-1$. Then
\begin{align*}
\eta_{n} + \eta_{n-1} + \dots + \eta_{1} + \eta_{0} &= \sum_{\lambda \in \Lambda} a_{\lambda} \cdot (\xi_{n}(\lambda) + \xi_{n-1}(\lambda) + \dots + \xi_{1}(\lambda) + \xi_{0}(\lambda)) \\
                                                    &\in \text{Ker} \, d_{n} \subseteq T_{n}.
\end{align*}
Because
\begin{align*}
\eta_{0} &= \sum_{\lambda \in \Lambda} a_{\lambda} \cdot \xi_{0}(\lambda) \\
         &= \sum_{\lambda \in \Lambda} a_{\lambda} \cdot (w_{(\lambda, \emptyset)} \otimes e_{N}) \\
         &= (\sum_{\lambda \in \Lambda} a_{\lambda} \cdot w_{(\lambda, \emptyset)}) \otimes e_{N} \\
         &\in \text{Im}(\varphi_{1} \otimes K_{n}),
\end{align*}
we get $\eta_{n} \in \text{Im}(F_{n} \otimes \partial_{1})$ by (2) of \ref{2}.

Finally we prove (2). Let us consider the following commutative diagram
\[
\begin{array}{cccccccc}
\! F_{n} \otimes_{R} K_{1} \! & \! \stackrel{F_{n} \otimes \partial_{1}}{\lra} \! & \! F_{n} \otimes_{R} K_{0} \! & \! \lra \! & \! F_{n}/QF_{n} \! & \! \lra \! & \! 0 \! & \text{(ex)} \\
\big\downarrow\vcenter{\rlap{$\scriptstyle{\sigma_{1}}$}} & & \big\downarrow\vcenter{\rlap{$\scriptstyle{\sigma_{0}}$}} & & \big\downarrow\vcenter{\rlap{$\scriptstyle{\overline{\sigma_{0}}}$}} & & & \\
F_{1} & \! \stackrel{\varphi_{1}}{\lra} \! & F_{0} & \! \lra \! & \! F_{0}/M \! & \! \lra \! & \! 0 \! & \text{(ex)} \rlap{,}
\end{array}
\]
where $\overline{\sigma_{0}}$ is the map induced from $\sigma_{0}$. For $\forall \lambda \in \Lambda$ and $\forall i \in N$, we have
\[
x_{i} \cdot w_{(\lambda, \emptyset)} = \varphi_{1}(w_{(\lambda, \{ i \})}) \in M,
\]
which means $w_{(\lambda, \emptyset)} \in M :_{F_{0}} Q$.
Hence $\text{Im} \, \sigma_{0} \subseteq M :_{F_{0}} Q$, and so $\text{Im} \, \overline{\sigma_{0}} \subseteq (M :_{F_{0}} Q)/M$.
On the other hand, as $\sigma_{0}^{-1}(\text{Im} \, \varphi_{1}) = \text{Im}(F_{n} \otimes \partial_{1})$, we see that $\overline{\sigma_{0}}$ is injective.
Therefore we get $\text{Im} \, \overline{\sigma_{0}} = (M :_{F_{0}} Q)/M$ since $(M :_{F_{0}} Q)/M \cong F_{n}/QF_{n}$ by \ref{4} and $F_{n}/QF_{n}$ has a finite length.
Thus the assertion (2) follows and the proof is complete.
\end{proof}

In the rest, $\sigma_{\bullet} : F_{n} \otimes_{R} K_{\bullet} \lra F_{\bullet}$ is the chain map constructed in \ref{5}.
Then, by \ref{1} the mapping cone $\text{Cone}(\sigma_{\bullet})$ gives an $R$-free resolution of $M :_{F_{0}} Q$, that is,
\begin{multline*}
0
\lra
F_{n} \otimes_{R} K_{n}
\stackrel{\psi_{n+1}}{\lra}
\begin{matrix} F_{n} \otimes_{R} K_{n-1} \\ \oplus \\ F_{n} \end{matrix}
\stackrel{\psi_{n}}{\lra}
\begin{matrix} F_{n} \otimes_{R} K_{n-2} \\ \oplus \\ F_{n-1} \end{matrix}
\stackrel{\lsp{\varphi_{n-1}}}{\lra}
\begin{matrix} F_{n} \otimes_{R} K_{n-3} \\ \oplus \\ F_{n-2} \end{matrix}
\\
\stackrel{\lsa{\varphi_{n-2}}}{\lra}
\begin{matrix} F_{n} \otimes_{R} K_{n-4} \\ \oplus \\ F_{n-3} \end{matrix}
\lra
\dotsb
\lra
\begin{matrix} F_{n} \otimes_{R} K_{1} \\ \oplus \\ F_{2} \end{matrix}
\stackrel{\lsa{\varphi_{2}}}{\lra}
\begin{matrix} F_{n} \otimes_{R} K_{0} \\ \oplus \\ F_{1} \end{matrix}
\stackrel{\lsa{\varphi_{1}}}{\lra}
F_{0}
\end{multline*}
is acyclic and $\text{Im} \, \lsa{\varphi_{1}} = M :_{F_{0}} Q$, where
\begin{gather*}
\psi_{n+1} = \begin{pmatrix} F_{n} \otimes \partial_{n} \\ (-1)^{n} \cdot \sigma_{n} \end{pmatrix}, ~
\psi_{n} = \begin{pmatrix} F_{n} \otimes \partial_{n-1} & 0 \\ (-1)^{n-1} \cdot \sigma_{n-1} & \varphi_{n} \end{pmatrix}, ~
\lsp{\varphi_{n-1}} = \begin{pmatrix} F_{n} \otimes \partial_{n-2} & 0 \\ (-1)^{n-2} \cdot \sigma_{n-2} & \varphi_{n-1} \end{pmatrix},
\\
\lsa{\varphi_{p}} = \begin{pmatrix} F_{n} \otimes \partial_{p-1} & 0 \\ (-1)^{p-1} \cdot \sigma_{p-1} & \varphi_{p} \end{pmatrix} ~ \text{for} ~ 2 \leq \forall p \leq n-2 ~ \text{and} ~
\lsa{\varphi_{1}} = \begin{pmatrix} \sigma_{0} & \varphi_{1} \end{pmatrix}.
\end{gather*}
Because $\sigma_{n} : F_{n} \otimes_{R} K_{n} \lra F_{n}$ is an isomorphism, we can define
\[
\phi = \begin{pmatrix} 0 & (-1)^{n} \cdot \sigma_{n}^{-1} \end{pmatrix} : \begin{matrix} F_{n} \otimes_{R} K_{n-1} \\ \oplus \\ F_{n} \end{matrix} \lra F_{n} \otimes_{R} K_{n}.
\]
Then $\phi \circ \psi_{n+1} = \text{id}_{F_{n} \otimes_{R} K_{n}}$ and $\text{Ker} \, \phi = F_{n} \otimes_{R} K_{n-1}$.
Hence, by (1) of \ref{3}, we get the acyclic complex
\[
0
\lra
\lsp{F_{n}}
\stackrel{\lsp{\varphi_{n}}}{\lra}
\lsp{F_{n-1}}
\stackrel{\lsp{\varphi_{n-1}}}{\lra}
\lsa{F_{n-2}}
\stackrel{\lsa{\varphi_{n-2}}}{\lra}
\lsa{F_{n-3}}
\lra
\dotsb
\lra
\lsa{F_{2}}
\stackrel{\lsa{\varphi_{2}}}{\lra}
\lsa{F_{1}}
\stackrel{\lsa{\varphi_{1}}}{\lra}
\lsa{F_{0}} = F_{0},
\]
where
\begin{gather*}
\lsp{F_{n}} = F_{n} \otimes_{R} K_{n-1}, ~
\lsp{F_{n-1}} = \begin{matrix} F_{n} \otimes_{R} K_{n-2} \\ \oplus \\ F_{n-1} \end{matrix}, ~
\lsa{F_{p}} = \begin{matrix} F_{n} \otimes_{R} K_{p-1} \\ \oplus \\ F_{p} \end{matrix} ~ \text{for} ~ 1 \leq \forall p \leq n-2
\\
\text{and} ~ \lsp{\varphi_{n}} = \begin{pmatrix} F_{n} \otimes \partial_{n-1} \\ (-1)^{n-1} \cdot \sigma_{n-1} \end{pmatrix}.
\end{gather*}

Although $\text{Im} \, \lsp{\varphi_{n}}$ may not be contained in $\mathfrak{m} \cdot \lsp{F_{n-1}}$,
removing non-minimal components from $\lsp{F_{n}}$ and $\lsp{F_{n-1}}$,
we get free $R$-modules $\lsa{F_{n}}$ and $\lsa{F_{n-1}}$ such that
\[
0
\lra
\lsa{F_{n}}
\stackrel{\lsa{\varphi_{n}}}{\lra}
\lsa{F_{n-1}}
\stackrel{\lsa{\varphi_{n-1}}}{\lra}
\lsa{F_{n-2}}
\lra
\dotsb
\lra
\lsa{F_{1}}
\stackrel{\lsa{\varphi_{1}}}{\lra}
\lsa{F_{0}} = F_{0}
\]
is acyclic and $\text{Im} \, \lsa{\varphi_{n}} \subseteq \mathfrak{m} \cdot \lsa{F_{n-1}}$,
where $\lsa{\varphi_{n}}$ and $\lsa{\varphi_{n-1}}$ are the restrictions of $\lsp{\varphi_{n}}$ and $\lsp{\varphi_{n-1}}$, respectively.
In the rest of this section, we describe a concrete procedure to get $\lsa{F_{n}}$ and $\lsa{F_{n-1}}$. For that purpose, we use the following notation.
As described in Introduction, for any $\xi \in F_{n} \otimes_{R} K_{n-2}$ and $\eta \in F_{n-1}$,
\[
\brac{\xi} := \begin{pmatrix} \xi \\ 0 \end{pmatrix} \in \lsp{F_{n-1}} ~ \text{and} ~ \ang{\eta} := \begin{pmatrix} 0 \\ \eta \end{pmatrix} \in \lsp{F_{n-1}}.
\]
In particular, for any $(\lambda, I) \in \Lambda \times N_{n-2}$, we denote $\brac{v_{\lambda} \otimes e_{I}}$ by $\brac{\lambda, I}$.
Moreover, for a subset $U$ of $F_{n-1}$, $\ang{U} := \{ \ang{u} \}_{u \in U}$.

Now, let us choose a subset $\lsp{\Lambda}$ of $\widetilde{\Lambda}$ and a subset $U$ of $F_{n-1}$ so that
\[
\{ v_{(\lambda, i)} \}_{(\lambda, i) \in \lsp{\Lambda}} \cup U
\]
is an $R$-free basis of $F_{n-1}$. We would like to choose $\lsp{\Lambda}$ as big as possible. The following almost obvious fact is useful to find $\lsp{\Lambda}$ and $U$.

\begin{lem}\label{6}
Let $V$ be an $R$-free basis of $F_{n-1}$. If a subset $\lsp{\Lambda}$ of $\widetilde{\Lambda}$ and a subset $U$ of $V$ satisfy
\begin{itemize}
\item[{\rm (i)}]
$\sharp \, \lsp{\Lambda} + \sharp \, U \leq \sharp \, V$, and
\item[{\rm (ii)}]
$V \subseteq R \cdot \{ v_{(\lambda, i)} \}_{(\lambda, i) \in \lsp{\Lambda}} + R \cdot U + \mathfrak{m} F_{n-1}$,
\end{itemize}
then $\{ v_{(\lambda, i)} \}_{(\lambda, i) \in \lsp{\Lambda}} \cup U$ is an $R$-free basis of $F_{n-1}$.
\end{lem}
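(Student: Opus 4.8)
The plan is to pass to the residue field $k = R/\mathfrak{m}$, combine a dimension count with Nakayama's lemma to get surjectivity, and then recover freeness from the well-definedness of rank over the local ring $R$. First I would set $r = \sharp \, V = {\rm rank}_{R} \, F_{n-1}$ and introduce the free $R$-module $P = R^{\oplus(\sharp \, \lsp{\Lambda} + \sharp \, U)}$ whose basis is indexed by the disjoint union of $\lsp{\Lambda}$ and $U$, together with the $R$-linear map $\pi : P \lra F_{n-1}$ sending the generator at $(\lambda, i)$ to $v_{(\lambda, i)}$ and the generator at $u \in U$ to $u$. Reducing modulo $\mathfrak{m}$ yields a $k$-linear map $\bar{\pi} : P/\mathfrak{m}P \lra F_{n-1}/\mathfrak{m}F_{n-1}$, and since the image of $V$ is a $k$-basis we have $\dim_{k}(F_{n-1}/\mathfrak{m}F_{n-1}) = r$.

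Next, hypothesis (ii) says precisely that $\bar{\pi}$ is surjective. A surjection of finite-dimensional $k$-vector spaces forces $\dim_{k}(P/\mathfrak{m}P) = \sharp \, \lsp{\Lambda} + \sharp \, U \geq r$; combined with (i) this gives $\sharp \, \lsp{\Lambda} + \sharp \, U = r$, so $\bar{\pi}$ is an isomorphism of $r$-dimensional $k$-vector spaces and $P$ is free of rank $r$. Moreover, surjectivity of $\bar{\pi}$ means $F_{n-1} = {\rm Im} \, \pi + \mathfrak{m}F_{n-1}$, hence $F_{n-1}/{\rm Im} \, \pi = \mathfrak{m} \cdot (F_{n-1}/{\rm Im} \, \pi)$; as this is a finitely generated $R$-module, Nakayama's lemma gives $F_{n-1}/{\rm Im} \, \pi = 0$, that is, $\pi$ is surjective.

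Finally I would promote the surjection $\pi : R^{\oplus r} \lra F_{n-1}$ to an isomorphism. Since $F_{n-1}$ is free, $\pi$ splits and yields $P \cong {\rm Ker} \, \pi \oplus F_{n-1}$; because rank is additive and well defined over the local ring $R$, we get ${\rm rank}({\rm Ker} \, \pi) = r - r = 0$, and ${\rm Ker} \, \pi$, being a direct summand of a free module, is projective, hence free over $R$, so ${\rm Ker} \, \pi = 0$. (Equivalently, one may simply cite that a surjective endomorphism of a finitely generated $R$-module is injective.) Thus $\pi$ is an isomorphism, which is exactly the assertion that the family $\{ v_{(\lambda, i)} \}_{(\lambda, i) \in \lsp{\Lambda}} \cup U$ is an $R$-free basis of $F_{n-1}$.

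The only step that requires genuine care is the passage from ``generating family of the correct cardinality'' to ``free basis'': the surjectivity is a routine application of Nakayama's lemma, but injectivity genuinely uses that $R$ is local (so that rank is well defined and projective modules are free). Everything else is bookkeeping with cardinalities, and the inequality in (i) is what pins down $\sharp \, \lsp{\Lambda} + \sharp \, U = r$ and thereby rules out any redundancy among the chosen elements.
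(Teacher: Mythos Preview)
The paper does not prove this lemma; it is introduced as an ``almost obvious fact'' and no argument is given. Your proof is correct and supplies exactly the standard verification one would expect: reduce modulo $\mathfrak{m}$, use hypothesis~(ii) together with Nakayama's lemma to see that the family generates $F_{n-1}$, and then use hypothesis~(i) and the well-definedness of rank over the local ring $R$ to conclude that a generating family of cardinality ${\rm rank}_{R}\,F_{n-1}$ is automatically an $R$-free basis.
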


Let us notice that
\[
\{ \brac{\lambda, I} \}_{(\lambda, I) \in \Lambda \times N_{n-2}} \cup \{ \ang{v_{(\lambda, i)}} \}_{(\lambda, i) \in \lsp{\Lambda}} \cup \ang{U}
\]
is an $R$-free basis of $\lsp{F_{n-1}}$. We define $\lsa{F_{n-1}}$ to be the direct summand of $\lsp{F_{n-1}}$ generated by
\[
\{ \brac{\lambda, I} \}_{(\lambda, I) \in \Lambda \times N_{n-2}} \cup \ang{U}.
\]
Let $\lsa{\varphi_{n-1}}$ be the restriction of $\lsp{\varphi_{n-1}}$ to $\lsa{F_{n-1}}$.

\begin{thm}\label{7}
If we can take $\widetilde{\Lambda}$ itself as $\lsp{\Lambda}$, then
\[
0
\lra
\lsa{F_{n-1}}
\stackrel{\lsa{\varphi_{n-1}}}{\lra}
\lsa{F_{n-2}}
\lra
\dotsb
\lra
\lsa{F_{1}}
\stackrel{\lsa{\varphi_{1}}}{\lra}
\lsa{F_{0}} = F_{0}
\]
is acyclic. Hence we have ${\rm depth}_{R} \, F_{0}/(M :_{F_{0}} Q) > 0$.
\end{thm}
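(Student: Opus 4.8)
The plan is to obtain the desired complex from the acyclic mapping-cone resolution
\[
0 \lra \lsp{F_{n}} \stackrel{\lsp{\varphi_{n}}}{\lra} \lsp{F_{n-1}} \stackrel{\lsp{\varphi_{n-1}}}{\lra} \lsa{F_{n-2}} \lra \dotsb \lra \lsa{F_{1}} \stackrel{\lsa{\varphi_{1}}}{\lra} F_{0}
\]
of $M :_{F_{0}} Q$ by deleting its leftmost term $\lsp{F_{n}} = F_{n} \otimes_{R} K_{n-1}$, which is an instance of the splitting Lemma \ref{3}~(1). Concretely, I would exhibit a retraction $\phi \colon \lsp{F_{n-1}} \lra \lsp{F_{n}}$ with $\phi \circ \lsp{\varphi_{n}} = {\rm id}_{\lsp{F_{n}}}$ and ${\rm Ker} \, \phi = \lsa{F_{n-1}}$; Lemma \ref{3}~(1), applied to the initial exact segment of the resolution, then removes $\lsp{F_{n}}$ and replaces $\lsp{F_{n-1}}$ by $\lsa{F_{n-1}}$, while the tail from $\lsa{F_{n-2}}$ onward is inherited unchanged.

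The retraction comes from the explicit form of $\lsp{\varphi_{n}}$. Since $\{ v_{\lambda} \otimes \check{e}_{i} \}_{(\lambda, i) \in \widetilde{\Lambda}}$ is an $R$-free basis of $\lsp{F_{n}}$ and $\lsp{\varphi_{n}} = \left( \begin{smallmatrix} F_{n} \otimes \partial_{n-1} \\ (-1)^{n-1} \cdot \sigma_{n-1} \end{smallmatrix} \right)$, condition (3) of \ref{5} computes the $F_{n-1}$-component of $\lsp{\varphi_{n}}(v_{\lambda} \otimes \check{e}_{i})$ as
\[
\ang{(-1)^{n-1} \cdot \sigma_{n-1}(v_{\lambda} \otimes \check{e}_{i})} = \ang{(-1)^{n-1} \cdot (-1)^{n+i-1} \cdot v_{(\lambda, i)}} = (-1)^{i} \cdot \ang{v_{(\lambda, i)}},
\]
a unit multiple of the basis vector $\ang{v_{(\lambda, i)}}$. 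Under the hypothesis $\lsp{\Lambda} = \widetilde{\Lambda}$, the family $\{ \ang{v_{(\lambda, i)}} \}_{(\lambda, i) \in \widetilde{\Lambda}}$ is exactly the complement, inside the chosen basis of $\lsp{F_{n-1}}$, of the generators $\{ \brac{\lambda, I} \}_{(\lambda, I) \in \Lambda \times N_{n-2}} \cup \ang{U}$ of $\lsa{F_{n-1}}$, so $\lsp{F_{n-1}} = \lsa{F_{n-1}} \oplus P$ with $P = R \cdot \{ \ang{v_{(\lambda, i)}} \}_{(\lambda, i) \in \widetilde{\Lambda}}$.

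Writing $\pi_{P} \colon \lsp{F_{n-1}} \lra P$ for the projection with kernel $\lsa{F_{n-1}}$, the displayed computation shows that $\theta := \pi_{P} \circ \lsp{\varphi_{n}}$ carries the basis $\{ v_{\lambda} \otimes \check{e}_{i} \}$ onto the basis $\{ \ang{v_{(\lambda, i)}} \}$ up to the units $(-1)^{i}$, hence is an isomorphism $\lsp{F_{n}} \stackrel{\cong}{\lra} P$. Setting $\phi = \theta^{-1} \circ \pi_{P}$ then gives $\phi \circ \lsp{\varphi_{n}} = \theta^{-1} \circ \theta = {\rm id}_{\lsp{F_{n}}}$ and ${\rm Ker} \, \phi = {\rm Ker} \, \pi_{P} = \lsa{F_{n-1}}$, as required. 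Since $\lsa{\varphi_{n-1}}$ is by definition the restriction of $\lsp{\varphi_{n-1}}$ to $\lsa{F_{n-1}} = {\rm Ker} \, \phi$, Lemma \ref{3}~(1) yields the asserted acyclic complex. For $n = 2$ one applies the lemma with the four-term exact sequence $0 \lra \lsp{F_{2}} \lra \lsp{F_{1}} \lra F_{0} \lra F_{0}/(M :_{F_{0}} Q)$; for $n \geq 3$ the terms $\lsa{F_{n-2}}, \lsa{F_{n-3}}$ of the mapping cone serve as the last two spots.

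For the depth statement, the resulting complex is a free resolution of $F_{0}/(M :_{F_{0}} Q)$ of length $n-1$, since ${\rm Im} \, \lsa{\varphi_{1}} = M :_{F_{0}} Q$. Thus ${\rm pd}_{R} \, F_{0}/(M :_{F_{0}} Q) \leq n-1$, and as $R$ is Cohen-Macaulay local of dimension $n$ (so ${\rm depth}_{R} \, R = n$), the Auslander--Buchsbaum formula gives ${\rm depth}_{R} \, F_{0}/(M :_{F_{0}} Q) = n - {\rm pd}_{R} \, F_{0}/(M :_{F_{0}} Q) \geq 1 > 0$. The only genuinely delicate point is the sign bookkeeping that makes the $F_{n-1}$-block of $\lsp{\varphi_{n}}$ an isomorphism onto $P$ precisely when $\lsp{\Lambda} = \widetilde{\Lambda}$, together with the identification ${\rm Ker} \, \phi = \lsa{F_{n-1}}$ with the summand as defined rather than an abstract complement; once these are in hand the rest is a direct appeal to Lemma \ref{3} and to Auslander--Buchsbaum.
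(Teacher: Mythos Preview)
Your proof is correct and follows the same approach as the paper: both construct a retraction $\phi : \lsp{F_{n-1}} \to \lsp{F_{n}}$ with $\phi \circ \lsp{\varphi_{n}} = {\rm id}$ and ${\rm Ker}\,\phi = \lsa{F_{n-1}}$, then invoke Lemma~\ref{3}(1). The paper defines $\phi$ directly on the basis by $\phi(\ang{v_{(\lambda,i)}}) = (-1)^{i}\, v_{\lambda} \otimes \check{e}_{i}$ and zero elsewhere, which is precisely your $\theta^{-1}\circ\pi_{P}$; your additional Auslander--Buchsbaum justification for the depth claim is a welcome elaboration of what the paper leaves implicit.
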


\begin{proof}
If $\lsp{\Lambda} = \widetilde{\Lambda}$, there exists a homomorphism $\phi : \lsp{F_{n-1}} \lra \lsp{F_{n}}$ such that
\begin{gather*}
\phi(\brac{\lambda, I}) = 0 \quad \text{for any} ~ (\lambda, I) \in \Lambda \times N_{n-2}, \\
\phi(\ang{v_{(\lambda, i)}}) = (-1)^{i} \cdot v_{\lambda} \otimes \check{e}_{i} \quad \text{for any} ~ (\lambda, i) \in \widetilde{\Lambda}, \\
\phi(\ang{u}) = 0 \quad \text{for any} ~ u \in U.
\end{gather*}
Then $\phi \circ \lsp{\varphi_{n}} = \text{id}_{\lsp{F_{n}}}$ and $\text{Ker} \, \phi = \lsa{F_{n-1}}$. Hence, by (1) of \ref{3} we get the required assertion.
\end{proof}

In the rest of this section, we assume $\lsp{\Lambda} \subsetneq \widetilde{\Lambda}$ and put $\lsa{\Lambda} = \widetilde{\Lambda} \setminus \lsp{\Lambda}$.
Then, for any $(\mu, j) \in \lsa{\Lambda}$, it is possible to write
\[
v_{(\mu, j)} = \sum_{(\lambda, i) \in \lsp{\Lambda}} a_{(\lambda, i)}^{(\mu, j)} \cdot v_{(\lambda, i)} + \sum_{u \in U} b_{u}^{(\mu, j)} \cdot u,
\]
where $a_{(\lambda, i)}^{(\mu, j)}, b_{u}^{(\mu, j)} \in R$. Here, if $\lsp{\Lambda}$ is big enough, we can choose every $b_{u}^{(\mu, j)}$ from $\mathfrak{m}$.
In fact, if $b_{u}^{(\mu, j)} \notin \mathfrak{m}$ for some $u \in U$, then we can replace $\lsp{\Lambda}$ and $U$ by $\lsp{\Lambda} \cup \{ (\mu, j) \}$ and $U \setminus \{ u \}$, respectively.
Furthermore, because of a practical reason, let us allow that some terms of $v_{(\lambda, i)}$ for $(\lambda, i) \in \lsa{\Lambda}$ with non-unit coefficients appear in the right hand side, that is,
for any $(\mu, j) \in \lsa{\Lambda}$, we write
\[
v_{(\mu, j)} = \sum_{(\lambda, i) \in \widetilde{\Lambda}} a_{(\lambda, i)}^{(\mu, j)} \cdot v_{(\lambda, i)} + \sum_{u \in U} b_{u}^{(\mu, j)} \cdot u,
\]
where
\[
a_{(\lambda, i)}^{(\mu, j)} \in \begin{cases}
                                R            & \text{if $(\lambda, i) \in \lsp{\Lambda}$}, \\
                                \\
                                \mathfrak{m} & \text{if $(\lambda, i) \in \lsa{\Lambda}$}
                                \end{cases}
~ \text{and} ~ b_{u}^{(\mu, j)} \in \mathfrak{m}.
\]
Using this expression, for any $(\mu, j) \in \lsa{\Lambda}$, the following element in $\lsp{F_{n}}$ can be defined.
\[
\lsa{v_{(\mu, j)}} := (-1)^{j} \cdot v_{\mu} \otimes \check{e}_{j} + \sum_{(\lambda, i) \in \widetilde{\Lambda}} (-1)^{i-1} \cdot a_{(\lambda, i)}^{(\mu, j)} \cdot v_{\lambda} \otimes \check{e}_{i}.
\]

\begin{lem}\label{8}
For any $(\mu, j) \in \lsa{\Lambda}$, we have
\[
\lsp{\varphi_{n}}(\lsa{v_{(\mu, j)}})
= (-1)^{j} \cdot \brac{v_{\mu} \otimes \partial_{n-1}(\check{e}_{j})}
+ \sum_{(\lambda, i) \in \widetilde{\Lambda}} (-1)^{i-1} \cdot a_{(\lambda, i)}^{(\mu, j)} \cdot \brac{v_{\lambda} \otimes \partial_{n-1}(\check{e}_{i})}
+ \sum_{u \in U} b_{u}^{(\mu, j)} \cdot \ang{u}.
\]
As a consequence, we have $\lsp{\varphi_{n}}(\lsa{v_{(\mu, j)}}) \in \mathfrak{m} \cdot \lsa{F_{n-1}}$ for any $(\mu, j) \in \lsa{\Lambda}$.
\end{lem}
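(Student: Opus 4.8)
The plan is to compute $\lsp{\varphi_{n}}(\lsa{v_{(\mu, j)}})$ directly, treating the two rows of $\lsp{\varphi_{n}} = \begin{pmatrix} F_{n} \otimes \partial_{n-1} \\ (-1)^{n-1} \cdot \sigma_{n-1} \end{pmatrix}$ separately, and then to read off the containment in $\mathfrak{m} \cdot \lsa{F_{n-1}}$ from the resulting expression. The top component contributes the $\brac{\cdot}$-terms and the bottom component the $\ang{\cdot}$-term.

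First I would handle the top component. Since $F_{n} \otimes \partial_{n-1}$ is $R$-linear and sends $v_{\lambda} \otimes \check{e}_{i}$ to $v_{\lambda} \otimes \partial_{n-1}(\check{e}_{i})$, applying it to the definition of $\lsa{v_{(\mu, j)}}$ reproduces verbatim the two $\brac{\cdot}$-summands asserted in the formula; no sign manipulation is needed here. The substance lies in the bottom component $(-1)^{n-1} \cdot \sigma_{n-1}(\lsa{v_{(\mu, j)}})$. Here I would invoke property (3) of Theorem \ref{5}, namely $\sigma_{n-1}(v_{\lambda} \otimes \check{e}_{i}) = (-1)^{n+i-1} \cdot v_{(\lambda, i)}$, and expand. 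The leading term contributes $(-1)^{j} \cdot (-1)^{n+j-1} \cdot v_{(\mu, j)} = (-1)^{n-1} \cdot v_{(\mu, j)}$, while a generic summand contributes $(-1)^{i-1} \cdot (-1)^{n+i-1} \cdot a_{(\lambda, i)}^{(\mu, j)} \cdot v_{(\lambda, i)} = (-1)^{n} \cdot a_{(\lambda, i)}^{(\mu, j)} \cdot v_{(\lambda, i)}$. At this point I would substitute the defining relation $v_{(\mu, j)} = \sum_{(\lambda, i) \in \widetilde{\Lambda}} a_{(\lambda, i)}^{(\mu, j)} \cdot v_{(\lambda, i)} + \sum_{u \in U} b_{u}^{(\mu, j)} \cdot u$: the two copies of $v_{(\mu, j)}$ carry the opposite signs $(-1)^{n-1}$ and $(-1)^{n}$ and cancel, leaving exactly $(-1)^{n-1} \cdot \sum_{u \in U} b_{u}^{(\mu, j)} \cdot u$ for $\sigma_{n-1}(\lsa{v_{(\mu, j)}})$. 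Multiplying by the prefactor $(-1)^{n-1}$ clears the sign and yields $\sum_{u \in U} b_{u}^{(\mu, j)} \cdot u$ as the bottom component, i.e. precisely the $\ang{\cdot}$-term of the formula.

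The main obstacle I anticipate is exactly this sign bookkeeping and cancellation: the parity of $n$ enters in several places, and the whole point is that the sign attached to $v_{\mu} \otimes \check{e}_{j}$ and the signs attached to the $a_{(\lambda, i)}^{(\mu, j)} \cdot v_{\lambda} \otimes \check{e}_{i}$ have been arranged so that, after $\sigma_{n-1}$ is applied and the defining relation for $v_{(\mu, j)}$ is used, the $v_{(\mu, j)}$-contributions destroy each other and only the $U$-part survives. Keeping the indices $(\lambda, i)$ and $u$ disjoint and tracking $(-1)^{n-1}+(-1)^{n}=0$ is the delicate bit.

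With the formula in hand, the final assertion is immediate. Each $\partial_{n-1}(\check{e}_{i}) = \partial_{n-1}(e_{N \setminus \{ i \}})$ lies in $\mathfrak{m} K_{n-2}$, since every term of it carries a factor $x_{k} \in \mathfrak{m}$; hence $\brac{v_{\lambda} \otimes \partial_{n-1}(\check{e}_{i})} \in \mathfrak{m} \cdot \lsa{F_{n-1}}$, because $\lsa{F_{n-1}}$ contains every $\brac{\lambda, I}$ with $I \in N_{n-2}$. This covers the first two groups of terms regardless of the coefficients $a_{(\lambda, i)}^{(\mu, j)}$. For the last group, $b_{u}^{(\mu, j)} \in \mathfrak{m}$ and $\ang{u} \in \lsa{F_{n-1}}$ for $u \in U$, so $\sum_{u \in U} b_{u}^{(\mu, j)} \cdot \ang{u} \in \mathfrak{m} \cdot \lsa{F_{n-1}}$. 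Summing the three contributions gives $\lsp{\varphi_{n}}(\lsa{v_{(\mu, j)}}) \in \mathfrak{m} \cdot \lsa{F_{n-1}}$, as required.
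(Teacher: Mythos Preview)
Your proof is correct and follows essentially the same approach as the paper: split $\lsp{\varphi_{n}}$ into its two rows, evaluate $F_{n}\otimes\partial_{n-1}$ trivially and $\sigma_{n-1}$ via property~(3) of Theorem~\ref{5}, then use the defining relation for $v_{(\mu,j)}$ to collapse the bottom component to $(-1)^{n-1}\sum_{u}b_{u}^{(\mu,j)}u$. You additionally spell out why each summand lies in $\mathfrak{m}\cdot\lsa{F_{n-1}}$, which the paper leaves implicit.
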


\begin{proof}
By the definition of $\lsp{\varphi_{n}}$, for any $(\mu, j) \in \lsa{\Lambda}$, we have
\[
\lsp{\varphi_{n}}(\lsa{v_{(\mu, j)}}) = \brac{(F_{n} \otimes \partial_{n-1})(\lsa{v_{(\mu, j)}})} + \ang{(-1)^{n-1} \cdot \sigma_{n-1}(\lsa{v_{(\mu, j)}})}.
\]
Because
\[
(F_{n} \otimes \partial_{n-1})(\lsa{v_{(\mu, j)}})
= (-1)^{j} \cdot v_{\mu} \otimes \partial_{n-1}(\check{e}_{j}) + \sum_{(\lambda, i) \in \widetilde{\Lambda}} (-1)^{i-1} \cdot a_{(\lambda, i)}^{(\mu, j)} \cdot v_{\lambda} \otimes \partial_{n-1}(\check{e}_{i})
\]
and
\begin{align*}
\sigma_{n-1}(\lsa{v_{(\mu, j)}})
&= (-1)^{j} \cdot \sigma_{n-1}(v_{\mu} \otimes \check{e}_{j}) + \sum_{(\lambda, i) \in \widetilde{\Lambda}} (-1)^{i-1} \cdot a_{(\lambda, i)}^{(\mu, j)} \cdot \sigma_{n-1}(v_{\lambda} \otimes \check{e}_{i}) \\
&= (-1)^{n-1} \cdot v_{(\mu, j)} + (-1)^{n} \cdot \sum_{(\lambda, i) \in \widetilde{\Lambda}} a_{(\lambda, i)}^{(\mu, j)} \cdot v_{(\lambda, i)} \\
&= (-1)^{n-1} \cdot (v_{(\mu, j)} - \sum_{(\lambda, i) \in \widetilde{\Lambda}} a_{(\lambda, i)}^{(\mu, j)} \cdot v_{(\lambda, i)}) \\
&= (-1)^{n-1} \cdot \sum_{u \in U} b_{u}^{(\mu, j)} \cdot u,
\end{align*}
we get the required equality.
\end{proof}

Let $\lsa{F_{n}}$ be the $R$-submodule of $\lsp{F_{n}}$ generated by $\{ \lsa{v_{(\mu, j)}}\}_{(\mu, j) \in \lsa{\Lambda}}$ and let $\lsa{\varphi_{n}}$ be the restriction of $\lsp{\varphi_{n}}$ to $\lsa{F_{n}}$.
By \ref{8} we have $\text{Im} \, \lsa{\varphi_{n}} \subseteq \lsa{F_{n-1}}$. Thus we get a complex
\[
0
\lra
\lsa{F_{n}}
\stackrel{\lsa{\varphi_{n}}}{\lra}
\lsa{F_{n-1}}
\lra
\dotsb
\lra
\lsa{F_{1}}
\stackrel{\lsa{\varphi_{1}}}{\lra}
\lsa{F_{0}} = F_{0}.
\]
This is the complex we desire. In fact, the following result holds.

\begin{thm}\label{9}
$(\lsa{F_{\bullet}}, \lsa{\varphi_{\bullet}})$ is an acyclic complex of finitely generated free $R$-modules with the following properties.
\begin{itemize}
\item[{\rm (1)}]
${\rm Im} \, \lsa{\varphi_{1}} = M :_{F_{0}} Q$ and ${\rm Im} \, \lsa{\varphi_{n}} \subseteq \mathfrak{m} \cdot \lsa{F_{n-1}}$.
\item[{\rm (2)}]
$\{ \lsa{v_{(\mu, j)}}\}_{(\mu, j) \in \lsa{\Lambda}}$ is an $R$-free basis of $\lsa{F_{n}}$.
\item[{\rm (3)}]
$\{ \brac{\lambda, I} \}_{(\lambda, I) \in \Lambda \times N_{n-2}} \cup \ang{U}$ is an $R$-free basis of $\lsa{F_{n-1}}$.
\end{itemize}
\end{thm}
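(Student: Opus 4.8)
The plan is to read off all three conclusions from the acyclic complex already obtained from ${\rm Cone}(\sigma_{\bullet})$ via (1) of \ref{3} (the one with terms $\lsp{F_{n}}, \lsp{F_{n-1}}, \lsa{F_{n-2}}, \dots, F_{0}$), by splitting off a free direct summand in degrees $n$ and $n-1$ using (2) of \ref{3}. Two of the assertions are then immediate. Conclusion (3) is the definition of $\lsa{F_{n-1}}$ together with the fact, recorded before \ref{6}, that $\{ \brac{\lambda, I} \}_{(\lambda,I) \in \Lambda \times N_{n-2}} \cup \{ \ang{v_{(\lambda,i)}} \}_{(\lambda,i) \in \lsp{\Lambda}} \cup \ang{U}$ is an $R$-free basis of $\lsp{F_{n-1}}$, so its subset $\{ \brac{\lambda, I} \} \cup \ang{U}$ is a basis of the summand it generates. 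In (1), the inclusion ${\rm Im} \, \lsa{\varphi_{n}} \subseteq \mathfrak{m} \cdot \lsa{F_{n-1}}$ is exactly \ref{8}, while ${\rm Im} \, \lsa{\varphi_{1}} = M :_{F_{0}} Q$ is inherited from ${\rm Cone}(\sigma_{\bullet})$ because the bottom of the complex is never altered.

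The substance lies in (2) and in acyclicity. For (2) I would write down the change-of-basis matrix carrying the standard basis $\{ v_{\lambda} \otimes \check{e}_{i} \}_{(\lambda,i) \in \widetilde{\Lambda}}$ of $\lsp{F_{n}} = F_{n} \otimes_{R} K_{n-1}$ to the family $\{ v_{\lambda} \otimes \check{e}_{i} \}_{(\lambda,i) \in \lsp{\Lambda}} \cup \{ \lsa{v_{(\mu,j)}} \}_{(\mu,j) \in \lsa{\Lambda}}$. Ordering the index set with $\lsp{\Lambda}$ first, the defining formula for $\lsa{v_{(\mu,j)}}$ makes this matrix block upper triangular: the $\lsp{\Lambda}$-block is the identity, and the $\lsa{\Lambda}$-block $D$ has diagonal entries $(-1)^{j}(1 - a_{(\mu,j)}^{(\mu,j)})$, which are units since $a_{(\mu,j)}^{(\mu,j)} \in \mathfrak{m}$, and off-diagonal entries $(-1)^{i-1} a_{(\lambda,i)}^{(\mu,j)} \in \mathfrak{m}$. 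Over the local ring $R$ such a $D$ is invertible, so the whole family is a basis of $\lsp{F_{n}}$; in particular $\{ \lsa{v_{(\mu,j)}} \}_{(\mu,j) \in \lsa{\Lambda}}$ is a basis of the free summand $\lsa{F_{n}}$ it generates, and $\lsp{F_{n}} = \lsp{(\lsp{F_{n}})} \oplus \lsa{F_{n}}$ with $\lsp{(\lsp{F_{n}})} := R \cdot \{ v_{\lambda} \otimes \check{e}_{i} \}_{(\lambda,i) \in \lsp{\Lambda}}$.

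For acyclicity I would apply (2) of \ref{3} to the exact segment $0 \lra \lsp{F_{n}} \stackrel{\lsp{\varphi_{n}}}{\lra} \lsp{F_{n-1}} \stackrel{\lsp{\varphi_{n-1}}}{\lra} \lsa{F_{n-2}} \stackrel{\lsa{\varphi_{n-2}}}{\lra} \lsa{F_{n-3}}$ (for $n = 2$ one uses instead $0 \lra \lsp{F_{2}} \lra \lsp{F_{1}} \lra F_{0} \lra F_{0}/(M :_{F_{0}} Q)$). Taking $\lsa{F_{n}}$ and $\lsa{F_{n-1}}$ as the prescribed summands, the two hypotheses of (2) of \ref{3} are $\lsp{\varphi_{n}}(\lsa{F_{n}}) \subseteq \lsa{F_{n-1}}$, which is \ref{8}, and $\lsp{\varphi_{n}}(\lsp{(\lsp{F_{n}})}) = \lsp{(\lsp{F_{n-1}})}$ for a complement $\lsp{(\lsp{F_{n-1}})}$ of $\lsa{F_{n-1}}$ in $\lsp{F_{n-1}}$. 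I expect the genuine obstacle to be precisely the identification of this complement: the prescribed summand $\lsa{F_{n-1}}$ absorbs the $\brac{v_{\lambda} \otimes \partial_{n-1}(\check{e}_{i})}$ parts of the images, so the naive span of $\{ \ang{v_{(\lambda,i)}} \}_{\lsp{\Lambda}}$ is \emph{not} a complement. The correct choice is $\lsp{(\lsp{F_{n-1}})} := \lsp{\varphi_{n}}(\lsp{(\lsp{F_{n}})})$ itself, and the point to verify is that this is complementary to $\lsa{F_{n-1}}$.

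That verification is the one place needing a small computation. From the definition of $\lsp{\varphi_{n}}$ and (3) of \ref{5} one gets $\lsp{\varphi_{n}}(v_{\lambda} \otimes \check{e}_{i}) = \brac{v_{\lambda} \otimes \partial_{n-1}(\check{e}_{i})} + (-1)^{i} \cdot \ang{v_{(\lambda,i)}}$. In the basis of $\lsp{F_{n-1}}$ recalled above I would replace each $\ang{v_{(\lambda,i)}}$ with $(\lambda,i) \in \lsp{\Lambda}$ by $\lsp{\varphi_{n}}(v_{\lambda} \otimes \check{e}_{i})$; since the coefficient of $\ang{v_{(\lambda,i)}}$ is the unit $(-1)^{i}$ and the remaining $\brac{\cdot}$-terms lie in the span of the retained generators $\{ \brac{\lambda,I} \}$, this substitution is invertible, so $\{ \lsp{\varphi_{n}}(v_{\lambda} \otimes \check{e}_{i}) \}_{\lsp{\Lambda}} \cup \{ \brac{\lambda,I} \} \cup \ang{U}$ is again a basis of $\lsp{F_{n-1}}$. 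This simultaneously shows that $\lsp{\varphi_{n}}$ carries $\lsp{(\lsp{F_{n}})}$ isomorphically onto $\lsp{(\lsp{F_{n-1}})}$ and that $\lsp{F_{n-1}} = \lsp{(\lsp{F_{n-1}})} \oplus \lsa{F_{n-1}}$. With both hypotheses in hand, (2) of \ref{3} gives exactness of $0 \lra \lsa{F_{n}} \lra \lsa{F_{n-1}} \lra \lsa{F_{n-2}} \lra \lsa{F_{n-3}}$, and splicing with the untouched tail $\lsa{F_{n-2}} \lra \dotsb \lra F_{0}$ yields acyclicity of $\lsa{F_{\bullet}}$.
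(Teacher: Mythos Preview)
Your proposal is correct and follows essentially the same route as the paper: both arguments split $\lsp{F_{n}}$ and $\lsp{F_{n-1}}$ into the same pair of direct summands (the paper writes $\lspp{F_{n}} = R\cdot\{v_{\lambda}\otimes\check{e}_{i}\}_{(\lambda,i)\in\lsp{\Lambda}}$ and $\lspp{F_{n-1}} = R\cdot\{\lsp{\varphi_{n}}(v_{\lambda}\otimes\check{e}_{i})\}_{(\lambda,i)\in\lsp{\Lambda}}$, exactly your $\lsp{(\lsp{F_{n}})}$ and $\lsp{(\lsp{F_{n-1}})}$), verify $\lsp{\varphi_{n}}(\lspp{F_{n}})=\lspp{F_{n-1}}$ via the same computation of $\lsp{\varphi_{n}}(v_{\lambda}\otimes\check{e}_{i})$, and then invoke (2) of \ref{3}. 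The only cosmetic difference is that for the freeness of $\lsa{F_{n}}$ the paper appeals to Nakayama's lemma and a rank count, whereas you phrase the same fact as invertibility of a block upper-triangular matrix whose $\lsa{\Lambda}$-block is a unit modulo $\mathfrak{m}$.
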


\begin{proof}
First, let us notice that $\{ v_{\lambda} \otimes \check{e}_{i} \}_{(\lambda, i) \in \widetilde{\Lambda}}$ is an $R$-free basis of $\lsp{F_{n}}$ and
\[
v_{\mu} \otimes \check{e}_{j} \in R \cdot \lsa{v_{(\mu, j)}} + R \cdot \{ v_{\lambda} \otimes \check{e}_{i} \}_{(\lambda, i) \in \lsp{\Lambda}} + \mathfrak{m} \cdot \lsp{F_{n}}
\]
for any $(\mu, j) \in \lsa{\Lambda}$. Hence, by Nakayama's lemma it follows that $\lsp{F_{n}}$ is generated by
\[
\{ v_{\lambda} \otimes \check{e}_{i} \}_{(\lambda, i) \in \lsp{\Lambda}} \cup \{ \lsa{v_{(\mu, j)}} \}_{(\mu, j) \in \lsa{\Lambda}},
\]
which must be an $R$-free basis since $\text{rank}_{R} \, \lsp{F_{n}} = \sharp \, \widetilde{\Lambda} = \sharp \, \lsp{\Lambda} + \sharp \, \lsa{\Lambda}$.
Let $\lspp{F_{n}}$ be the $R$-submodule of $\lsp{F_{n}}$ generated by $\{ v_{\lambda} \otimes \check{e}_{i} \}_{(\lambda, i) \in \lsp{\Lambda}}$. Then $\lsp{F_{n}} = \lspp{F_{n}} \oplus \lsa{F_{n}}$.

Next, let us recall that
\[
\{ \brac{\lambda, I} \}_{(\lambda, I) \in \Lambda \times N_{n-2}} \cup \{ \ang{v_{(\lambda, i)}} \}_{(\lambda, i) \in \lsp{\Lambda}} \cup \ang{U}
\]
is an $R$-free basis of $\lsp{F_{n-1}}$. Because
\[
\lsp{\varphi_{n}}(v_{\lambda} \otimes \check{e}_{i}) = \brac{v_{\lambda} \otimes \partial_{n-1}(\check{e}_{i})} + (-1)^{i} \cdot \ang{v_{(\lambda, i)}},
\]
we see that
\[
\{ \brac{\lambda, I} \}_{(\lambda, I) \in \Lambda \times N_{n-2}} \cup \{ \lsp{\varphi_{n}}(v_{\lambda} \otimes \check{e}_{i}) \}_{(\lambda, i) \in \lsp{\Lambda}} \cup \ang{U}
\]
is also an $R$-free basis.
Let $\lspp{F_{n-1}} = R \cdot \{ \lsp{\varphi_{n}}(v_{\lambda} \otimes \check{e}_{i}) \}_{(\lambda, i) \in \lsp{\Lambda}}$. Then $\lsp{F_{n-1}} = \lspp{F_{n-1}} \oplus \lsa{F_{n-1}}$.

It is obvious that $\lsp{\varphi_{n}}(\lspp{F_{n}}) = \lspp{F_{n-1}}$. Moreover, by \ref{8} we get $\lsp{\varphi_{n}}(\lsa{F_{n}}) \subseteq \lsa{F_{n-1}}$.
Therefore, by (2) of \ref{3}, it follows that $\lsa{F_{\bullet}}$ is acyclic. We have already seen (3) and the first assertion of (1). The second assertion of (1) follows from \ref{8}.
Moreover, the assertion (2) is now obvious.
\end{proof}

\end{document}